\numberwithin{equation}{section}
\newtheorem{theorem}{Theorem}[section]
\newtheorem{lemma}[theorem]{Lemma}
\newtheorem{corollary}[theorem]{Corollary}
\newtheorem{proposition}[theorem]{Proposition}
\theoremstyle{definition}
\newtheorem{definition}[theorem]{Definition}
\newtheorem{assumption}[theorem]{Assumption}
\newtheorem{example}[theorem]{Example}
\theoremstyle{remark}
\newtheorem{remark}[theorem]{Remark}
\begin{document}
\title{Subsonic Euler flows in a three-dimensional finitely long  cylinder with arbitrary cross section}
\author{Shangkun Weng\thanks{School of Mathematics and Statistics, Wuhan University, Wuhan, Hubei Province, 430072, People's Republic of China. Email: skweng@whu.edu.cn}
\and Changkui Yao\thanks{School of Mathematics and Statistics, Wuhan University, Wuhan, Hubei Province, 430072, People's Republic of China. Email: Yaochangkui@whu.edu.cn}}
\date{}
\maketitle

\def\be{\begin{eqnarray}}
\def\ee{\end{eqnarray}}
\def\ba{\begin{aligned}}
\def\ea{\end{aligned}}
\def\bay{\begin{array}}
\def\eay{\end{array}}
\def\bca{\begin{cases}}
\def\eca{\end{cases}}
\def\p{\partial}
\def\hphi{\hat{\phi}}
\def\bphi{\bar{\phi}}
\def\no{\nonumber}
\def\eps{\epsilon}
\def\de{\delta}
\def\De{\Delta}
\def\om{\omega}
\def\Om{\Omega}
\def\f{\frac}
\def\th{\theta}
\def\vth{\vartheta}
\def\la{\lambda}
\def\lab{\label}
\def\b{\bigg}
\def\var{\varphi}
\def\na{\nabla}
\def\ka{\kappa}
\def\al{\alpha}
\def\La{\Lambda}
\def\ga{\gamma}
\def\Ga{\Gamma}
\def\ti{\tilde}
\def\wti{\widetilde}
\def\wh{\widehat}
\def\ol{\overline}
\def\ul{\underline}
\def\Th{\Theta}
\def\si{\sigma}
\def\Si{\Sigma}
\def\oo{\infty}
\def\q{\quad}
\def\z{\zeta}
\def\co{\coloneqq}
\def\eqq{\eqqcolon}
\def\di{\displaystyle}
\def\bt{\begin{theorem}}
\def\et{\end{theorem}}
\def\bc{\begin{corollary}}
\def\ec{\end{corollary}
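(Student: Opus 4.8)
The plan is to reformulate the steady Euler system by the deformation–curl decomposition so that the elliptic and transport (hyperbolic) modes are separated, and then to construct the subsonic flow as a small perturbation of the background state $(\bar\rho,\bar u_1,0,0,\bar p)$ through a carefully designed iteration. Introducing the Bernoulli function $B=\frac12|u|^2+\frac{\gamma}{\gamma-1}\frac{p}{\rho}$ and the entropy $S=p\rho^{-\gamma}$, the momentum equations are equivalent to the vector identity together with the transport relations
\begin{equation}
u\times(\nabla\times u)=\nabla B-T\nabla S,\qquad u\cdot\nabla B=0,\qquad u\cdot\nabla S=0,
\end{equation}
where $T$ denotes the temperature. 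Since on the inlet the axial velocity stays close to $\bar u_1>0$, the first component of the streamline field never vanishes under the smallness hypotheses, so $B$ and $S$ are determined throughout $\Omega$ by integrating along streamlines from their prescribed inlet values; the same transport structure yields the transverse components of the vorticity, while its axial component is recovered algebraically from the identity above.

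With $B$ and $S$ expressed as functionals of the velocity field, the density becomes $\rho=\rho(B,S,|u|^2)$ and the continuity equation $\nabla\cdot(\rho u)=0$ reduces to a single quasilinear second-order equation that is uniformly elliptic precisely when $|u|$ stays below the local sound speed, i.e.\ in the subsonic regime. I would realize this via a Helmholtz-type splitting $u=\nabla\phi+W$, choosing the field $W$ to carry the prescribed vorticity by solving a div–curl problem on the cylinder with the slip condition on the lateral wall, so that $\phi$ solves $\nabla\cdot\big(\rho(\cdots)\nabla\phi\big)=f$ with mixed boundary conditions: a flux (Neumann-type) condition on the inlet cross section, the slip condition on the lateral boundary $\partial\Sigma\times(0,L)$, and a Dirichlet-type condition on the outlet coming from the prescribed end pressure.

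The iteration then alternates between the two blocks: given an approximate velocity, solve the transport equations for $(B,S)$ and the vorticity, then solve the div–curl problem for $W$ and the elliptic problem for $\phi$, and update $u=\nabla\phi+W$. Well-posedness of the elliptic step and the a priori estimates it requires follow from the variational theory together with $W^{2,p}$ and Schauder estimates for mixed boundary value problems on the cylinder; establishing a contraction in a suitably chosen high-order Hölder or weighted Sobolev space then yields both existence and uniqueness of the flow near the background state, and the subsonic bound is propagated by the smallness of the perturbation.

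The main obstacle I anticipate is closing these estimates for an \emph{arbitrary} cross section $\Sigma$: there is no separation of variables or explicit modal expansion available, so the elliptic regularity must be obtained by intrinsic energy methods and mixed-boundary elliptic theory, and delicate care is needed along the edges where the inlet and outlet faces meet the lateral boundary, where the transition between Neumann-, oblique-, and Dirichlet-type conditions can generate corner singularities. Controlling regularity across these edges—equivalently, identifying the correct compatibility conditions on the boundary data so that the transported quantities and the elliptic solution match without loss of derivatives—is the delicate point on which the entire scheme hinges.
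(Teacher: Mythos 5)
The quoted ``statement'' is not a mathematical claim at all --- it is a stray fragment of the paper's preamble (the shorthand macros that open and close \texttt{corollary} environments) --- so the only sensible comparison is with the result your proposal evidently targets, namely Theorem 1.1 and its proof in Section 3.

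Your overall architecture (transport of $B$ and the entropy along streamlines, a div--curl/Helmholtz splitting of the velocity, an alternating iteration) does match the paper's, but there are two concrete points where your scheme would fail. First, you have the vorticity structure backwards: you claim the transport structure yields the \emph{transverse} components of the vorticity while the \emph{axial} component is recovered algebraically from $u\times(\nabla\times u)=\nabla B-T\nabla S$. This is impossible. The matrix of the linear map $\omega\mapsto u\times\omega$ annihilates the direction of $u$, and near the background state $u\approx(\bar u,0,0)$ that direction is precisely the axial one; indeed the first component of Crocco's identity, $u_2\omega_3-u_3\omega_2=\partial_1 B-T\partial_1 S$, contains no $\omega_1$ at all, and the component along $u$ is degenerate since $u\cdot\nabla B=u\cdot\nabla S=0$. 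What the identity does give, because $u_1>0$, are the \emph{algebraic} formulas (2.2)--(2.3) for $\omega_2,\omega_3$ in terms of $\omega_1$, $B$, $K$; the axial component $\omega_1$ must then be determined by the extra constraint $\operatorname{div}\operatorname{curl}u=0$, which after substituting (2.2)--(2.3) produces the transport equation (2.5) that the paper solves in Step 2. As written, your scheme contains no equation that determines $\omega_1$.

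Second, your handling of the elliptic step discards the paper's key device and replaces it with exactly the tool that does not suffice. You assert that for an arbitrary cross section ``there is no separation of variables or explicit modal expansion available'' and propose generic mixed-boundary elliptic theory instead; but the edge regularity at $\partial\Gamma_0\cap\partial\Gamma_w$ and $\partial\Gamma_L\cap\partial\Gamma_w$ is precisely where generic theory loses derivatives, and you acknowledge this is the crux without supplying a mechanism. The paper's point (advertised in the abstract, following Seth's thesis) is that separation of variables \emph{is} available for any smooth bounded $\Sigma$: one separates the axial variable from the cross-sectional ones, expanding the potential as $\phi=\sum_n a_n(x_1)e_n(x')$ in the Neumann eigenfunctions $e_n$ of the Laplacian on $\Sigma$, which exist and form an orthonormal basis of $L^2(\Sigma)$ with no explicit formula required. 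The coefficients $a_n$ solve explicit two-point ODE problems whose estimates, summed in $n$, yield $\phi\in H^4(\Omega)$ and hence $H^3$ regularity of the velocity up to the edges. Relatedly, your boundary condition is misidentified: Theorem 1.1 prescribes the normal momentum at the exit, not the pressure, so the potential carries Neumann data at \emph{both} ends (with a solvability constant $\kappa$, shown afterwards to vanish by mass conservation and the compatibility condition (1.4)), rather than a Dirichlet condition at the outlet.
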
}
\def\bl{\begin{lemma}}
\def\el{\end{lemma}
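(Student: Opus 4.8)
The plan is to recast the steady compressible Euler system in the subsonic regime as a coupled elliptic--transport system via the deformation--curl decomposition, to linearize about the prescribed background flow, and to solve the resulting problem by an iteration that is shown to be contractive in a suitable functional space.

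First I would introduce the Bernoulli function $B = \f12 |u|^2 + \f{\ga}{\ga-1}\f{p}{\rho}$ and the entropy $S = p \rho^{-\ga}$. Using the momentum equations together with the relation $u \times \om = \na B - T \na S$, where $\om = \na \times u$ and $T$ denotes the temperature, the full Euler system becomes equivalent to the two transport equations $u \cdot \na B = 0$ and $u \cdot \na S = 0$, the first-order relation that determines $\om$ from $(B, S, u)$, and the mass conservation law $\na \cdot (\rho u) = 0$. In the subsonic region this last equation, rewritten in terms of a velocity potential or of the leading velocity component, is uniformly elliptic, and this is the structural feature I would exploit.

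The iteration would proceed as follows. Given an approximate velocity field $\ti u$ in a closed ball of the solution space, I would first solve the linear transport equations for $B$ and $S$ along the integral curves of $\ti u$ using the inlet data on $\{x_1 = 0\}$; since $\ti u$ is a small perturbation of the background flow, its integral curves connect every interior point to the inlet, so these transport problems are well posed. Next I would recover the vorticity $\om$ from the updated $(B,S)$, solve a div--curl system for the new velocity with the slip condition $u \cdot n = 0$ on the lateral boundary and the prescribed normal component at the inlet and outlet, and finally update the density from the subsonic elliptic equation with the outlet pressure condition. A subsonic truncation of the density--velocity relation is inserted so that the elliptic operator stays uniformly elliptic throughout the iteration; it is removed a posteriori once the solution is shown to remain strictly subsonic.

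The main obstacle, I expect, is closing the estimates for the three-dimensional div--curl and vorticity part under mixed boundary conditions on a domain of arbitrary cross section. In contrast with the irrotational case the vorticity does not decouple, and its transport must be controlled in a norm compatible with the elliptic regularity used for the density equation; moreover, the edges where the inlet and outlet meet the lateral wall force nonobvious compatibility conditions on the data and limit the attainable regularity. I would treat the div--curl system by a Hodge-type decomposition adapted to the cylinder, deriving $H^1$ and then higher-order estimates from $\na \times u$, $\na \cdot u$ and the boundary traces, and set up the contraction in a space such as $H^3$ (or a H\"older analogue) in which the transport and elliptic estimates are simultaneously available. Uniqueness would then follow by applying the same energy estimates to the difference of two solutions.
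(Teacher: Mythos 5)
Your overall strategy coincides with the paper's: a deformation--curl type decomposition (transport equations for the Bernoulli quantity and entropy, recovery of the vorticity, an elliptic div--curl system for the velocity), followed by a linearized iteration about the background state and a contraction argument. However, there are two genuine gaps. First, and most importantly, the central difficulty of this problem --- which is why the paper's title emphasizes \emph{arbitrary cross section} --- is the loss of regularity at the edges $\partial\Gamma_0\cap\partial\Gamma_w$ and $\partial\Gamma_L\cap\partial\Gamma_w$ where the entrance and exit meet the lateral wall. You flag this issue but your proposed remedy, a generic ``Hodge-type decomposition adapted to the cylinder'' with standard elliptic estimates, would not deliver $H^3(\Omega)$ regularity: domains with edges generically produce corner singularities, and no amount of interior or flat-boundary elliptic theory removes them. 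The paper resolves this by a combination of specific devices: the inlet data are taken compactly supported in the cross section ($B_0,K_0\in H^3_0(\Sigma)$, $J_0\in H^2_0(\Sigma)$), which propagates to edge compatibility conditions for the vorticity ($n_2\omega_3-n_3\omega_2=0$ on $\partial\Gamma_0\cup\partial\Gamma_L$ and $\partial_1\omega_1|_{x_1=0,L}\in H^{1/2}_{00}(\Sigma)$); the div--curl part with homogeneous normal data is then solved by the vector-potential result of Seth; and the remaining curl-free part is reduced to a potential equation solved by separation of variables (Fourier expansion in Neumann eigenfunctions of $\Sigma$), with explicit estimates on each mode giving $H^4$ regularity of the potential up to the edges. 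None of this machinery, nor any substitute for it, appears in your outline.

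Second, your treatment of the vorticity is structurally wrong: $\omega$ is \emph{not} determined by $(B,S)$ alone. The momentum equations give only $\omega_2,\omega_3$ algebraically in terms of $\omega_1$, $\nabla B$, $\nabla S$ and the velocity; the first component $\omega_1$ must be solved from its own transport equation (obtained from $\mathrm{div}\,\mathrm{curl}\,\mathbf{u}=0$) with the prescribed inlet vorticity $J_0$ --- data which your scheme never uses, even though Theorem 1.1 requires it. Relatedly, your plan to ``update the density from the subsonic elliptic equation with the outlet pressure condition'' conflates the deformation--curl formulation with the older pressure formulation of Chen--Yuan and Liu--Xu--Yuan: in the paper the density is recovered \emph{algebraically} from Bernoulli's law, the ellipticity resides in the deformation equation of the velocity system, and the exit condition is on the normal momentum, not the pressure. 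As written, your iteration solves one elliptic problem too many and imposes boundary data inconsistent with the theorem being proved.
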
}
\def\bp{\begin{proposition}}
\def\ep{\end{proposition}}
\def\br{\begin{remark}}
\def\er{\end{remark}}
\def\bd{\begin{definition}}
\def\ed{\end{definition}}
\def\bpf{\begin{proof}}
\def\epf{\end{proof}}
\def\bex{\begin{example}}
\def\eex{\end{example}}
\def\bq{\begin{question}}
\def\eq{\end{question}}
\def\bas{\begin{assumption}}
\def\eas{\end{assumption}}
\def\ber{\begin{exercise}}
\def\eer{\end{exercise}}
\def\mb{\mathbb}
\def\mbR{\mb{R}}
\def\mbZ{\mb{Z}}
\def\mc{\mathcal}
\def\mcS{\mc{S}}
\def\ms{\mathscr}
\def\lan{\langle}
\def\ran{\rangle}
\def\lb{\llbracket}
\def\rb{\rrbracket}
\def\fr#1#2{{\frac{#1}{#2}}}
\def\dfr#1#2{{\dfrac{#1}{#2}}}
\def\u{{\textbf u}}
\def\v{{\textbf v}}
\def\w{{\textbf w}}
\def\d{{\textbf d}}
\def\nn{{\textbf n}}
\def\x{{\textbf x}}
\def\e{{\textbf e}}
\def\D{{\textbf D}}
\def\U{{\textbf U}}
\def\M{{\textbf M}}
\def\F{{\mathcal F}}
\def\I{{\mathcal I}}
\def\W{{\mathcal W}}
\def\div{{\rm div\,}}
\def\curl{{\rm curl\,}}
\def\R{{\mathbb R}}
\def\FF{{\textbf F}}
\def\A{{\textbf A}}
\def\R{{\textbf R}}
\def\r{{\textbf r}}
\def\mc{\mathcal{C}}

\def\e{\bar}

\begin{abstract}
  This paper concerns the well-posedness of subsonic flows in a three-dimensional finitely long  cylinder with arbitrary cross section.  We establish the existence and uniqueness of subsonic flows in the Sobolev space  by prescribing the normal component of the momentum, the vorticity,  the entropy, the Bernoulli's quantity  at the entrance and the normal component of the momentum at the exit. One of the key points in the analysis is to utilize the deformation-curl decomposition for the steady Euler system introduced in \cite{WX19} to deal with the hyperbolic and elliptic modes. Another one is to  employ the  separation of variables to improve the regularity of solutions to a deformation-curl system near the intersection between the entrance and exit with the cylinder wall.
\end{abstract}
\begin{center}
\begin{minipage}{5.5in}
Mathematics Subject Classifications 2020: 35M12, 76G25, 76N10.\\
Key words: steady Euler system,  structural stability, the deformation-curl decomposition,  Sobolev space, the separation of variables.
\end{minipage}
\end{center}

\section{Introduction and main results}\noindent

\par The three-dimensional steady full Euler equations for compressible inviscid gas are
\begin{align}\label{euler}
\begin{cases}
\p_1(\rho u_1)+\p_2(\rho u_2) +\p_3(\rho u_3)=0,\\% (conservation\ of\ mass)\\
\p_1(\rho u_1^2 + P)+\p_2(\rho u_1 u_2)+\p_3(\rho u_1 u_3) = 0,\\% (conservation\ of\ momentum)\\
\p_1(\rho u_1u_2)+\p_2(\rho u_2^2 + P)+\p_3(\rho u_2 u_3) = 0,\\
\p_1(\rho u_1u_3)+\p_2(\rho u_2u_3 )+\p_3(\rho u_3^2 +P) = 0,\\
\text{div }(\rho (\frac12|{\bf u}|^2 +e) {\bf u}+ P{\bf u})=0, \\% (conservation\ of\ energy)
\end{cases}
\end{align}
where   \(\textbf{u} = (u_1,u_2,u_3)\), \(\rho\) , \( P\), and \(e\)  stand for the velocity, density, pressure and internal energy , respectively. For polytropic gases, the equation of state and the internal energy are
\begin{align*}
     P = K(S)\rho^{\gamma} , \quad K(S) = ae^{\frac{S}{c_v}}\quad and\quad e = \frac{P}{(\gamma-1)\rho},
\end{align*}
respectively, where \(\gamma > 1\), \( a\), and \(c_v\) are positive constants, and \(S\) is called the specific entropy.
Denote  the local sound speed $c(\rho, K)=\sqrt{\partial_\rho P(\rho, K)}=\sqrt{\gamma K(S)\rho^{\gamma-1}}$. Then  the subsonic flow means that $ |\textbf{u}|<c(\rho, K) $.

\par There have been some  studies on the steady  incompressible flows in bounded domains. Alber \cite{AH92} first established the existence and uniqueness of three dimensional steady incompressible flows with nonzero vorticity on bounded domains with smooth boundary. Tang-Xin \cite{TX09} established the well-posedness of the solutions to the incompressible steady Euler equations with a different boundary condition $\text{curl } \mathbf{u} = a \mathbf{u} + b$ for some functions  $a$ and $b$. It is worthy to point out that background flow in \cite{TX09} was not assumed to satisfy the Euler equations. Molinet \cite{LM99} extended Alber's method to nonsmooth domains and steady compressible  flows. However, he considered domains with a boundary consisting of smooth parts which meet at an angle smaller than 2$\pi$/7 and any integer multiple of the angle may not equal $\pi$. Buffoni and Wahlen \cite{BE19} proved the existence of a class of rotational flows with the form $\nabla f \times \nabla g$ for some functions $f$ and $g$ to the steady incompressible Euler equations in an unbounded domain of the form $(0, L) \times \mathbb{R}^2$, where the flows are periodic in both two unbounded directions.
Seth \cite{SS21} proved an existence result for solutions to the steady incompressible Euler equations in a domain with nonsmooth boundary. The boundary consists of three parts, one where fluid flows into the domain, one where the fluid flows out, and one which no fluid passes through. These three parts meet at right angles. To investrgate the regularity of the div-curl system with normal boundary condition near the corner-points, the author in \cite{SS21} employed the vector potential formulation and reduced the problem to the regularity of the Poisson equation with the mixed boundary conditions.

\par  There also have many literatures on the study of steady compressible subsonic Euler flows in finitely long nozzles. Du-Weng-Xin \cite{DWX14} and Weng \cite{WS14} established the well-posedness  of the subsonic irrotational flows through a two-dimensional finitely long nozzle by characterizing a class of physical acceptable boundary conditions. Chen-Xie  \cite{CX14} obtained the existence and uniqueness of three dimensional steady subsonic Euler flows in rectangular nozzles when prescribing normal component of momentum at both the entrance and exit. Weng \cite{WS15} introduced a reformulation of the steady Euler equations in terms of the reduced velocity, the Bernoulli's quantity and pressure and constructed a smooth subsonic Euler flow in a rectangular cylinder. Later on, Weng-Xin \cite{WX19} and Weng \cite{WS19} had put forward a deformation-curl decomposition for the three dimensional steady Euler system and steady Euler-Poisson system. As an application, they established the structural stability of 1-D subsonic flows under multidimensional perturbations on the entrance and exit of a rectangular cylinder, and the solutions obtained had optimal regularity for all physical quantities, the velocity, the Bernoulli's quantity, the entropy and the pressure share the same regularity.

\par In this paper, we will construct a subsonic solution to \eqref{euler} in a three-dimensional cylinder by imposing suitable boundary conditions at the entrance and exit, which is also close to the background solution $ \bar {\bm U} :=( \bar \rho,\bar u,0,0,\bar K) $, where \(\bar\rho, \;\bar u,\;\)and \(\bar K\)  are three positive constants satisfying the subsonic condition \( 0 < \bar u^2 < \bar K\gamma\bar\rho^{\gamma-1}\).  We also denote the Bernoulli's constant by \( \bar B = \frac{1}{2}|\bar u|^2 + \frac{\gamma}{\gamma - 1}\bar K \bar\rho^{\gamma -1}\).

\par The three-dimensional cylinder is given by
\begin{equation*}
\Omega=\{(x_1,x_2,x_3): x_1 \in (0,L),(x_2,x_3) \in \Sigma\},
\end{equation*}
where \(\Sigma \subset \mbR^2\) is a bounded simply-connected domain with smooth boundary \( \p \Sigma\).  the entrance, wall, and the exit of $\Omega$ are denoted by
\begin{equation*}
    \Gamma_0 = \Omega \cap \{x_1 = 0 \}, \quad \Gamma_w = (0,L) \times \p \Sigma, \quad \Gamma_L = \Omega \cap \{x_1=L\}.
\end{equation*}

First, at the entrance $  \Gamma_0 $, we impose the normal component of the momentum, the vorticity, the Bernoulli's quantity and the entropy :
\begin{align}\label{1-2}
    \begin{cases}
        (\rho u_1)(0,x^{\prime}) = \bar\rho\bar u + \sigma m_{0}(x^{\prime}),\\
    \omega_1(0,x^{\prime}) = \sigma J_{0}(x^{\prime}),\\
    B(0,x^{\prime}) = \Bar{B} + \sigma B_0 (x^{\prime}),\\
    K(0,x^{\prime}) = \Bar{K} + \sigma K_0(x^{\prime}),
    \end{cases}
\end{align}
where $ x^{\prime} = (x_2,x_3) $ and $ m_0 \in H^{\frac{5}{2}}(\Sigma)$ and \(J_{0} \in H^2_0(\Sigma)\) and $ (B_0,K_0)\in (H_0^3(\Sigma))^2 $.
\par At the exit  $  \Gamma_L $, we prescribe the normal component of the momentum:
\begin{align}\label{1-3}
    (\rho u_1)(L,x^{\prime}) = \bar\rho\bar u + \sigma m_{L}(x^{\prime}).
\end{align}
where \(m_{L} \in H^{\frac{5}{2}}(\Sigma)\) satisfying the compatibility conditions:
\begin{align}\label{compm}
     \int_{\Sigma}(m_{0}(x^{\prime}) - m_{L}(x^{\prime})) dx^{\prime} = 0.
\end{align}
\par On the cylinder wall \( \Gamma_{\omega}\), the usual slip boundary condition is imposed:
\begin{align}\label{1-5}
    n_2u_2 + n_3u_3 = 0,
\end{align}
where $(0,n_2,n_3)$ is the unit outer normal to  \( \Gamma_{\omega}\).

\par The main result in this paper is the following.

\begin{theorem}Given $(B_{0},K_{0}) \in (H_0^3(\Sigma))^2 $, $J_{0} \in H^2_0(\Sigma)$ and $(m_{0},m_{L}) \in (H^{\frac{5}{2}}(\Sigma))^2$ satisfying the compatibility conditions \eqref{compm}, there exists a positive small constant $\sigma_0$ depending on the background subsonic state $(\bar\rho,\bar u,0,0,\bar K)$ and $(m_{0},B_{0},K_{0},J_{0},m_{L})$, such that if $0 <  \sigma< \sigma_0$, then there exists a unique subsonic flow $(\rho, u_1,u_2,u_3,K) \in H^3(\Omega)$ to \eqref{euler} satisfying boundary conditions \eqref{1-2} and \eqref{1-3}. Moreover, the following estimate holds:
\begin{equation}
    \|(\rho,u_1,u_2,u_3,K) - (\bar\rho,\bar u,0,0,\bar K)\|_{3,\Omega} \leq \mc \sigma,
\end{equation}
where $\mc>0$ depends on $(\bar\rho, \bar u,\bar K)$ and $(m_{0},B_{0},K_{0},J_{0},m_{L})$.

\end{theorem}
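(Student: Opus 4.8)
The plan is to recast the full Euler system \eqref{euler} via the deformation-curl decomposition of \cite{WX19} and then solve the resulting problem by a linearization--iteration scheme, closing it with a contraction-mapping argument for small $\sigma$. First I would use the Bernoulli relation $B = \frac12|\mathbf{u}|^2 + \frac{\gamma}{\gamma-1}K\rho^{\gamma-1}$ to express the density as $\rho = \rho(B, K, |\mathbf{u}|^2)$, where the subsonic condition $|\mathbf{u}|^2 < \gamma K\rho^{\gamma-1}$ guarantees that this implicit relation is solvable and that the continuity equation $\text{div}(\rho\mathbf{u}) = 0$ is elliptic in the velocity. Along any streamline both the entropy and the Bernoulli quantity are conserved, so $\mathbf{u}\cdot\nabla K = 0$ and $\mathbf{u}\cdot\nabla B = 0$; these are the transport (hyperbolic) modes, to be integrated from the entrance data $K_0, B_0$ by the method of characteristics once a velocity field is known. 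The Crocco-type relation $\mathbf{u}\times\boldsymbol{\omega} = \nabla B - \frac{1}{\gamma-1}\rho^{\gamma-1}\nabla K$ then determines the transverse vorticity components $\omega_2, \omega_3$ algebraically in terms of $\omega_1$, $\nabla B$ and $\nabla K$ (legitimate because $u_1$ stays close to $\bar u > 0$), while $\omega_1$ itself obeys a transport equation obtained from $\text{div}\,\boldsymbol{\omega}=0$, integrated from the entrance datum $J_0$. This reduces \eqref{euler} to a deformation-curl system for $\mathbf{u}$: the deformation equation $\text{div}(\rho\mathbf{u}) = 0$ together with the curl equation $\text{curl}\,\mathbf{u} = \boldsymbol{\omega}$ with $\boldsymbol{\omega}$ already computed.

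For the iteration I would fix an approximate velocity $\mathbf{u}^{(n)}$, use it to transport $K, B, \omega_1$ from the entrance, assemble $\boldsymbol{\omega}$ via the Crocco relation, and then solve the deformation-curl system for $\mathbf{u}^{(n+1)}$ subject to the prescribed normal momentum $\rho u_1 = \bar\rho\bar u + \sigma m_0$ on $\Gamma_0$, $\rho u_1 = \bar\rho\bar u + \sigma m_L$ on $\Gamma_L$, and the slip condition $n_2 u_2 + n_3 u_3 = 0$ on $\Gamma_w$. Integrating $\text{div}(\rho\mathbf{u})=0$ over $\Omega$ shows that the compatibility condition \eqref{compm} is exactly the zero-flux solvability condition for this elliptic boundary-value problem. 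With all perturbations carrying a factor $\sigma$, the linear estimates for the transport subproblems and for the deformation-curl system would make the map $\mathbf{u}^{(n)}\mapsto\mathbf{u}^{(n+1)}$ a contraction on a small ball in the appropriate Sobolev space once $\sigma < \sigma_0$, yielding the existence and uniqueness of a fixed point together with the estimate $\|(\rho,\mathbf{u},K)-(\bar\rho,\bar u,0,0,\bar K)\|_{3,\Omega}\le\mathcal{C}\sigma$.

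The hard part is the regularity of the deformation-curl system at the edges $\Gamma_0\cap\Gamma_w$ and $\Gamma_L\cap\Gamma_w$, where the boundary condition switches type from the prescribed normal momentum on $\Gamma_0, \Gamma_L$ to the slip condition on $\Gamma_w$. Across such edges standard elliptic theory delivers only limited Sobolev regularity, below the target $H^3$, due to the corner singularities produced by this change of type. To overcome this I would, near each edge, exploit the product structure $\Omega = (0,L)\times\Sigma$ and apply separation of variables: expanding the unknowns in the eigenfunctions of the cross-sectional Laplacian on $\Sigma$ with the boundary condition inherited from $\Gamma_w$ reduces the local problem to a family of one-dimensional problems in $x_1$ whose regularity can be tracked mode by mode. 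The role of the hypotheses $(B_0, K_0)\in (H^3_0(\Sigma))^2$ and $J_0 \in H^2_0(\Sigma)$ (all vanishing on $\partial\Sigma$) together with \eqref{compm} is precisely to annihilate the singular modes in this expansion, so that the series sums to an $H^3$ function and the corner obstructions disappear. I expect this mode-by-mode regularity analysis, and the verification that the vanishing and compatibility conditions kill every obstruction, to be the main technical difficulty; the transport estimates and the contraction are comparatively routine.
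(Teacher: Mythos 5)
Your proposal follows essentially the same route as the paper: the deformation-curl decomposition into transport equations for $B$, $K$, and $\omega_1$, algebraic (Crocco-type) recovery of $\omega_2,\omega_3$, a linearized deformation-curl system for the velocity with the normal-momentum and slip conditions, the compatibility condition \eqref{compm} as the flux solvability constraint, separation of variables in cross-sectional eigenfunctions to obtain $H^3$ regularity at the edges (with the vanishing-trace hypotheses on $B_0,K_0,J_0$ supplying the needed edge compatibility), and a contraction argument for small $\sigma$. This matches the paper's three-step iteration and its use of the SS16-style eigenfunction expansion, so the approach is correct and not genuinely different.
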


\begin{remark}
The previous works  \cite{CS08,CY08,LXY16}  reduced the Euler system into a second-order elliptic equation for the pressure coupled with four transport equations of the velocity, the Bernoulli's quantity  and the entropy. Different from \cite{CS08,CY08,LXY16}, we use the deformation-curl decomposition introduced
in \cite{WX19, WS19} to decouple the hyperbolic and elliptic modes in the steady Euler system. The steady Euler system are rewritten as two transport equations for the Bernoulli's quantity and the entropy, two algebraic equations for two components of the vorticity, and a deformation-curl system for the velocity field. One of the crucial advantages for the deformation-curl decomposition is that the solutions obtained have the same regularity for the velocity, pressure and entropy.
\end{remark}

\begin{remark}
It should be noted that all the domains studied in  \cite{CX14,CY08, LXY16, WS15,  WX19, WS19} are rectangular cylinders or spherical shell. The reason for choosing these domains is to avoid the corner singularity near the intersections of the entrance, the exit and the nozzle wall. In \cite{SS16}, the author proved the existence and uniqueness of steady incompressible flows in the Sobolev space on cylinders with arbitrary cross section. The author in \cite{SS16} utilized the vector potential formulation of the div-curl system with normal boundary condition and the separation of variables to improve the regularity of the solution to the div-curl system near the intersections of the entrance, the exit and the nozzle wall. Here, we combine the method developed in \cite{SS16} and the deformation-curl decomposition in \cite{WX19, WS19} to establish the structural stability of the steady compressible Euler system in cylinders with arbitrary cross section.
\end{remark}

\par This paper will be organized as follows. In Section 2, we recall the deformation-curl decomposition to the steady Euler equations and present some results on the Sobolev spaces defined on a cylinder. In Section 3, we prove the structural stability of one dimensional subsonic background state under multi-dimensional perturbations on the entrance and exit of the cylinder.

\section{Some preliminaries}
\subsection{A deformation-curl reformulation to the steady Euler system}\noindent

\par The three dimensional steady Euler system is hyperbolic-elliptic coupled in subsonic regions,  whose effective decomposition into elliptic and hyperbolic modes is crucial for  developing a well-defined iteration.  Here we will use the deformation-curl decomposition to the steady Euler
system introduced \cite{WX19,WS19} to effectively decouple the hyperbolic and elliptic modes in subsonic region.

\par First, the Bernoulli's quantity $B = \frac{1}{2}|\textbf{u}|^2 + \frac{\gamma}{\gamma-1}K(S)\rho^{\gamma-1}$ and the entropy $K(S)$ satisfy the transport equations
\begin{equation}\label{B}
    (\textbf{u}\cdot \nabla)B = (\textbf{u}\cdot\nabla)K(S) = 0.
\end{equation}
Define the vorticity $\omega = \text{curl} \mathbf{u}=\sum_{i=1}^3\om {\bf e}_i$:
\begin{equation*}
    \omega_1 = \p_2u_3 - \p_3u_2,\; \omega_2 = \p_3u_1 - \p_1u_3,\; \omega_3 = \p_1u_2 - \p_2u_1.
\end{equation*}
From the third and fourth equation in \eqref{euler}, one can easily derive
\begin{equation}\label{w2}
    \omega_2(x_1,x^\prime) = \frac{u_2\omega_1 + \p_3 B -\frac{1}{\gamma-1}\rho^{\gamma-1}\p_3 K}{u_1},
 \end{equation}
    \begin{equation}\label{w3}
    \omega_3(x_1,x^\prime) = \frac{u_3\omega_1 - \p_2 B +\frac{1}{\gamma-1}\rho^{\gamma-1}\p_2 K}{u_1}.
\end{equation}
\par Note that
\begin{equation}\label{uw}
    \text{div  curl} \textbf{u} =\p_1 \om_1 +\p_2 \om_2+\p_3 \om_3= 0.
\end{equation}
 Substituting \eqref{w2} and \eqref{w3} into the equation  \eqref{uw} yields that
\begin{equation}\label{w1}
\begin{aligned}
    & \p_1\omega_1 + \sum_{i=2}^3\frac{u_i}{u_1}\p_i\omega_1 + \sum_{i=2}^3\p_i\bigg(\frac{u_i}{u_1} \bigg)\omega_1+ \p_2\bigg(\frac{1}{u_1} \bigg)\p_3 B -\p_3\bigg(\frac{1}{u_1} \bigg)\p_2B \\
    & \quad - \frac{1}{\gamma-1}\p_2\bigg(\frac{\rho^{\gamma-1}}{u_1} \bigg)\p_3K+ \frac{1}{\gamma-1}\p_3\bigg(\frac{\rho^{\gamma-1}}{u_1} \bigg)\p_2K=0.
    \end{aligned}
\end{equation}
 \par Next, we are going to study the elliptic modes  in \eqref{euler}. By definition of the Bernoulli's function, one has
 \begin{equation}\label{rho}
     \rho = H(B,K,|\textbf{u}|^2) = \bigg( \frac{\gamma-1}{\gamma K}\bigg)^{\frac{1}{\gamma-1}} \bigg( B -\frac{1}{2}|\textbf{u}|^2\bigg)^{\frac{1}{\gamma-1}}.
 \end{equation}
 Substituting \eqref{rho} into the equation in \eqref{euler}, we obtain the deformation form of the density
 \begin{equation*}
     H(B,K,|\textbf{u}|^2) \text{div}\textbf{u} + 2 \frac{\p H}{\p |\textbf{u}|^2}(B,K,|\textbf{u}|^2)u_iu_j\p_i u_j =0,
 \end{equation*}
which can be rewritten as a Frobenius inner product of a symmetric matrix and the deformation matrix
 \begin{equation}\label{rhorho}
     \sum_{i,j=1}^{3}\bigg(\delta_{ij} - \frac{u_iu_j}{c^2(\rho,K)} \bigg)D_{ij}(\textbf{u}) =0,
     %A(\rho,K):D(\mathbf{u}) = 0,
 \end{equation}
where $D_{ij}(\textbf{u})=\frac12 (\p_i u_j+\p_j u_i)$.

Combining \eqref{rhorho} with the curl system, we obtain the velocity field $\textbf{u}$ by solving the following  deformation-curl system:
\begin{align}\label{u}
    \begin{cases}
        \sum_{i,j=1}^{3}\big(\delta_{ij} - \frac{u_iu_j}{c^2(\rho,K)} \big)d_{ij}(\textbf{u}) =0,\\
        \text{curl }{\bf u}=(\om_1,\om_2,\om_3)^t.
    \end{cases}
\end{align}
After solving the above deformation-curl system to get the velocity, we use the formulation \eqref{rho} to recover the density and also the pressure.\\[1ex]
\par Then we have the following equivalence lemma.
\begin{lemma}(\textbf{Equivalence}).
Assume that  vector functions $(\rho,\mathbf{u},K)\in H^3(\Omega)$ defined on a domain $\Omega$ do not contain the vacuum (i.e. $\rho > 0 \;\text{in}\;\Omega$) and the horizontal velocity  is positive (i.e. $u_1 > 0 \;\text{in}\;\Omega$) , then the following two statements are equivalent:
\begin{enumerate}[\rm(i)]
 \item $(\rho,\mathbf{u},K)$ satisfy the steady Euler system \eqref{euler} in $\Omega$.
 \item $(\mathbf{u},B,K)$ satisfy the equations \eqref{B}, \eqref{w2}, \eqref{w3} and \eqref{u} in $\Omega$.
     \end{enumerate}
\end{lemma}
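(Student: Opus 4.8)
The plan is to prove the two implications separately, with the conceptual core being a single \emph{Bernoulli--vorticity identity}: for smooth non-vacuum flows, the three momentum equations in \eqref{euler}, taken together with the mass equation, are equivalent to
\begin{equation*}
\na B - \f{1}{\ga-1}\rho^{\ga-1}\na K = \u\times\om .
\end{equation*}
I would derive this by first rewriting the conservative momentum equations in the non-conservative form $\rho(\u\cdot\na)\u+\na P=0$ (legitimate precisely once $\div(\rho\u)=0$ is available), then combining the vector identity $(\u\cdot\na)\u=\na(\f12|\u|^2)-\u\times\om$ with the polytropic relation $\f1\rho\na P=\na B-\na(\f12|\u|^2)-\f{1}{\ga-1}\rho^{\ga-1}\na K$, which follows directly from the definitions of $B$ and $P=K\rho^{\ga}$. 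The second and third components of this identity, solved for $\om_2$ and $\om_3$ using $u_1>0$, are exactly \eqref{w2} and \eqref{w3}.

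For the direction (i)$\Rightarrow$(ii): the energy equation, after using $\rho(\f12|\u|^2+e)+P=\rho B$, becomes $\div(\rho B\u)=0$; subtracting $B\,\div(\rho\u)=0$ and dividing by $\rho>0$ yields $(\u\cdot\na)B=0$. Dotting the momentum equation with $\u$ and inserting this into the expansion of $(\u\cdot\na)B$ isolates $(\u\cdot\na)K=0$, which gives \eqref{B}. The Bernoulli--vorticity identity then delivers \eqref{w2} and \eqref{w3}. Finally, writing $\rho=H(B,K,|\u|^2)$ as in \eqref{rho}, the mass equation $\rho\,\div\u+(\u\cdot\na)\rho=0$ becomes, after eliminating $(\u\cdot\na)B$ and $(\u\cdot\na)K$ via \eqref{B} and using $\p H/\p|\u|^2=-\rho/(2c^2)$, exactly the deformation equation in \eqref{u}; the curl equation in \eqref{u} holds by the definition of $\om$.

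For the converse (ii)$\Rightarrow$(i) I would recover the equations in the order mass, momentum, energy. Defining $\rho$ by \eqref{rho} and reversing the previous computation, the deformation equation in \eqref{u} together with \eqref{B} gives $\div(\rho\u)=0$. For the momentum equations I would show the Bernoulli--vorticity identity holds componentwise: its second and third components are supplied by \eqref{w2}, \eqref{w3} combined with the curl constraint in \eqref{u}. The first component is the subtle point: setting $\mathbf{F}:=\na B-\f{1}{\ga-1}\rho^{\ga-1}\na K-\u\times\om$, from $F_2=F_3=0$ we have $\u\cdot\mathbf{F}=u_1F_1$, while on the other hand $\u\cdot\mathbf{F}=(\u\cdot\na)B-\f{1}{\ga-1}\rho^{\ga-1}(\u\cdot\na)K=0$ by \eqref{B} and $\u\cdot(\u\times\om)=0$, so $F_1=0$ follows from $u_1>0$. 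This restores all three momentum equations, and then $\div(\rho B\u)=B\,\div(\rho\u)+\rho(\u\cdot\na)B=0$ recovers the energy equation.

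The main obstacle I anticipate is precisely this recovery of the first momentum component in the converse direction: unlike the other relations it is not an algebraic consequence of the listed equations but must be extracted using the positivity $u_1>0$ together with the transport structure \eqref{B}, and one must carefully sequence the argument so that the mass equation is established before passing between the conservative and non-conservative forms of momentum. The regularity hypothesis $(\rho,\u,K)\in H^3(\Omega)$ guarantees, through the Sobolev embedding $H^3(\Omega)\hookrightarrow C^1(\ol{\Omega})$ and the fact that $H^3$ is a Banach algebra in three dimensions, that all products, chain-rule expansions, and divisions by $\rho$ and $u_1$ used above are classically justified.
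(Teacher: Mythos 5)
Your proposal is correct, and it is in fact more complete than what the paper itself records: the paper never writes out a proof of this lemma. Its Section 2.1 only performs the computation behind the direction (i)$\Rightarrow$(ii) --- reading \eqref{w2} and \eqref{w3} off the two transverse momentum equations in \eqref{euler}, and obtaining the deformation equation in \eqref{u} from the mass equation after substituting \eqref{rho} --- while the converse direction is implicitly deferred to the cited works \cite{WX19,WS19}. Your forward direction is essentially the same computation, merely packaged as the single vector identity $\nabla B-\frac{1}{\gamma-1}\rho^{\gamma-1}\nabla K=\mathbf{u}\times\omega$ (Crocco's theorem) rather than componentwise manipulation; this organization pays off because the same identity drives both directions. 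What your write-up genuinely adds is the proof of (ii)$\Rightarrow$(i), and your key step there is the right one: with $\mathbf{F}:=\nabla B-\frac{1}{\gamma-1}\rho^{\gamma-1}\nabla K-\mathbf{u}\times\omega$, the relations \eqref{w2}, \eqref{w3} together with the curl constraint in \eqref{u} give $F_2=F_3=0$, while \eqref{B} and $\mathbf{u}\cdot(\mathbf{u}\times\omega)=0$ give $\mathbf{u}\cdot\mathbf{F}=0$, so $u_1F_1=0$ and the hypothesis $u_1>0$ force $F_1=0$; this is exactly the mechanism by which the deformation-curl decomposition encodes the streamwise momentum balance, and it is the argument underlying the cited references. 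Your sequencing of the recovery (mass first, from the deformation equation plus \eqref{B}; then momentum, passing between conservative and non-conservative forms only after $\operatorname{div}(\rho\mathbf{u})=0$ is in hand; then energy via $\operatorname{div}(\rho B\mathbf{u})=B\operatorname{div}(\rho\mathbf{u})+\rho(\mathbf{u}\cdot\nabla)B$) is also the correct one, and the embedding $H^3(\Omega)\hookrightarrow C^1(\overline{\Omega})$ indeed justifies all the classical manipulations and divisions by $\rho$ and $u_1$.
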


\subsection{Some results for  Sobolev spaces on $\Omega=(0,L)\times\Sigma$}\noindent

\par To solve the problem \eqref{euler}  with the boundary conditions \eqref{1-2} -\eqref{1-5} in  $ H^3(\Omega) $, we first give the embedding theorem and trace theorem for the Sobolev space in the cylinder $ \Omega $.

\begin{lemma}(Sobolev's embedding theorem \cite[Theorem \rm{5.4}]{RA75}).
\begin{enumerate}[ \rm (1)]
    \item Let $mp > 3$, then
\begin{equation*}
    W^{j+m,p}(\Omega) \hookrightarrow C_b^j(\Omega),
\end{equation*}
where
\begin{equation*}
    C_b^j(\Omega) = \{f \in C^j(\Omega): D^{\alpha}f \text{ is bounded for } |\alpha| \leq j\},
\end{equation*}
with norm
\begin{equation*}
    \|f\|_{C_b^j(\Omega)} = \max_{|\alpha|\leq j} \sup_{x \in \Omega}|f(x)|.
\end{equation*}
  Moreover, for all $f \in W^{j+m,p}(\Omega)$, there exists a constant $\mc_0 =\mc_0(\Omega) $ such that
\begin{equation}\nonumber
    \|f\|_{C_b^j(\Omega)} \leq \mc_0\|f\|_{W^{j+m,p}(\Omega)}.
\end{equation}

\item Let $1\leq p \leq \infty$, then
\begin{equation*}
    W^{1,p}([0,L];X) \hookrightarrow C([0,L];X),
\end{equation*}
which means that any function in $W^{1,p}([0,L];X)$ is a function in $C([0,L];X)$ and that there exists a constant $\mc_1$ such that
\begin{equation}\nonumber
    \|f\|_{C([0,L];X)} \leq \mc_1\|f\|_{W^{1,p}([0,L];X)},
\end{equation}
for all $f \in W^{1,p}([0,L];X)$.
\end{enumerate}
\end{lemma}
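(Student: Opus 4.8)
Both assertions are classical components of Sobolev space theory, and since the statement is cited from \cite{RA75} I would ultimately invoke that reference; nonetheless the argument I would reproduce runs as follows. For part (1), since $\Om=(0,L)\times\Si\subset\mbR^3$ the ambient dimension is $n=3$. I would first reduce to the case $j=0$: if $f\in W^{j+m,p}(\Om)$ then every derivative $D^\al f$ with $|\al|\le j$ lies in $W^{m,p}(\Om)$, so it suffices to prove $W^{m,p}(\Om)\hookrightarrow C_b^0(\Om)$ whenever $mp>3$ and then to collect the derivatives $D^\al f$, $|\al|\le j$, to control the full $C_b^j$ norm. The crux of the base case is Morrey's inequality: when $p>n$, a $W^{1,p}$ function admits a Hölder-continuous representative with $\|f\|_{C^{0,1-n/p}}\le \mc_0\|f\|_{W^{1,p}}$. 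I would prove this through the standard pointwise bound estimating the oscillation of $f$ on a ball by a Riesz potential of $|\na f|$, using Hölder's inequality together with the hypothesis $p>n$ to render that potential integrable.

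To descend from the hypothesis $mp>3$ for general $m$ to the setting of Morrey's inequality, I would iterate the Gagliardo--Nirenberg--Sobolev embedding $W^{1,q}\hookrightarrow L^{q^*}$ with $q^*=nq/(n-q)$, starting from $q=p$: each application trades one derivative for a strictly larger integrability exponent, i.e. $W^{k,q}\hookrightarrow W^{k-1,q^*}$. After finitely many steps the exponent exceeds $n=3$, at which point one remaining derivative may be handled by Morrey's inequality, yielding $W^{m,p}(\Om)\hookrightarrow C_b^0(\Om)$; the condition $mp>3$ is exactly what guarantees that the integrability exponent crosses $n$ before the derivatives are exhausted. Because $\Om$ is a cylinder over a smooth bounded $\Si$, it is a Lipschitz domain satisfying the cone condition, so a bounded extension operator $W^{m,p}(\Om)\to W^{m,p}(\mbR^3)$ exists; this lets one transfer the preceding inequalities (which are first proved on $\mbR^3$) to $\Om$. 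The constant $\mc_0$ then depends on $\Om$ only through this extension operator and the universal embedding constants.

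For part (2) the setting is one-dimensional and $X$-valued, so the argument is more direct. Given $f\in W^{1,p}([0,L];X)$ with weak derivative $g=f'\in L^p([0,L];X)$, I would establish the Bochner fundamental theorem of calculus $f(t)-f(s)=\int_s^t g(\tau)\,d\tau$ for a.e.\ $s,t$, which exhibits an absolutely continuous representative of $f$. Then for every $t\in[0,L]$ one has $\|f(t)\|_X\le \|f(s)\|_X+\int_0^L\|g(\tau)\|_X\,d\tau$; integrating in $s$ over $[0,L]$ and applying Hölder's inequality (using that $[0,L]$ is bounded, so $L^p\hookrightarrow L^1$) yields the uniform bound $\|f\|_{C([0,L];X)}\le \mc_1\|f\|_{W^{1,p}([0,L];X)}$ with $\mc_1$ depending only on $L$ and $p$. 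Continuity of the representative follows from the integral formula and dominated convergence.

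The genuinely delicate ingredient is the one in part (1): establishing the base estimates (Morrey's inequality and the Gagliardo--Nirenberg--Sobolev chain) together with the extension from $\Om$ to $\mbR^3$ that makes them applicable on the cylinder. Since this is precisely the content of the cited Theorem~5.4 in \cite{RA75}, I would invoke \cite{RA75} for it rather than reproduce the full potential-theoretic computations, and simply record here how the hypotheses ($mp>3$, the Lipschitz regularity of $\Om$, and boundedness of $[0,L]$) feed into the two embeddings and their constants $\mc_0,\mc_1$.
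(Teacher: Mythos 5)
Your proposal is correct and aligns with the paper, which offers no proof of its own and simply cites \cite[Theorem 5.4]{RA75}; your sketch (Morrey plus the Gagliardo--Nirenberg--Sobolev iteration with an extension operator for part (1), and the Bochner fundamental theorem of calculus for part (2)) is exactly the standard argument behind that citation. Since you too ultimately defer to \cite{RA75}, there is no substantive divergence to report.
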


\begin{lemma}(Trace theorem \cite[Theorem \rm{7.53}]{RA75} and \cite{DZ96}).
 Let
    \begin{equation*}
        \hat{H}^s(\p\Omega) = H^s(\Gamma_0) \times H^s(\Gamma_L) \times H^s(\Gamma_w),
    \end{equation*}
    with compatibility conditions on $\p\Gamma_0 \cap \p\Gamma_w$ and  $\p\Gamma_L \cap \p\Gamma_w$ (depending on s). Then  for $f\in H^m(\Omega) \cap C(\bar{\Omega})$, there exists a bounded linear operator $T: H^m(\Omega) \to H^{m-\frac{1}{2}}(\p\Omega)$ such that
\begin{align}\nonumber
    Tf = f|_{\p \Omega},   \quad
    \|Tf\|_{H^{m-\frac{1}{2}}{(\p\Omega)}}\leq \mc_2 \|f\|_{H^m(\Omega)}.
\end{align}
\end{lemma}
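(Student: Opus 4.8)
The plan is to build the trace operator $T$ by a localization argument that reduces the claim on the cylinder $\Om=(0,L)\times\Si$ to two model situations: a single smooth face, where the classical trace theorem applies, and a neighbourhood of the two edges $E_0=\{0\}\times\p\Si$ and $E_L=\{L\}\times\p\Si$, where the flat caps $\Ga_0,\Ga_L$ meet the lateral wall $\Ga_w$ at a right angle. First I would fix a finite open cover $\{O_j\}$ of $\ol{\Om}$ with a subordinate partition of unity $\{\chi_j\}\subset C_c^\infty(O_j)$, chosen so that each $O_j$ either (a) meets $\p\Om$ only in the relative interior of a single face, or (b) is centred at an edge point. Writing $f=\sum_j\chi_j f$, it then suffices to bound the trace of each piece $\chi_j f\in H^m(\Om)$ and to reassemble: multiplication by $\chi_j$ is bounded on $H^m$, and the finitely many local traces patch together into a well-defined element of $\hat H^{m-\frac12}(\p\Om)$.

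For the faces of type (a) I would invoke the classical trace theorem \cite[Theorem 7.53]{RA75}. After flattening the relevant smooth portion of $\p\Om$ by a $C^m$ diffeomorphism, the problem reduces to the half-space $\mbR^3_+=\{x_1>0\}$, where a direct Fourier-transform computation shows that restriction to $\{x_1=0\}$ maps $H^m(\mbR^3_+)$ boundedly into $H^{m-\frac12}(\mbR^2)$, coinciding with pointwise restriction on continuous functions. Applying this on $\Ga_0$, $\Ga_L$ and (after flattening $\p\Si$) on $\Ga_w$ gives, for each such piece, the local bound $\|T(\chi_j f)\|_{H^{m-\frac12}}\le C\|\chi_j f\|_{H^m(\Om)}\le C\|f\|_{H^m(\Om)}$.

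The genuinely delicate point, and where I expect the main obstacle, is the behaviour near the edges $E_0$ and $E_L$: here $\p\Om$ is not smooth, so no smooth-boundary trace theorem applies directly. I would exploit the product structure of $\Om$ to flatten $\p\Si$, so that in local coordinates $(x_1,y,z)$ the domain becomes the convex dihedral wedge $\{x_1>0\}\cap\{y>0\}$ times the edge variable $z$, the faces $\Ga_0$ and $\Ga_w$ corresponding to $\{x_1=0\}$ and $\{y=0\}$ and meeting along $E_0=\{x_1=y=0\}$ at angle $\pi/2$. On such a wedge I would take iterated traces: restrict first to $\{x_1=0\}$, landing in $H^{m-\frac12}$ of the quarter-space boundary, and then restrict that to the edge. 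As in \cite{DZ96}, the outcome is that each face trace lies in $H^{m-\frac12}$ of its face while their further restrictions to $E_0$ must coincide and lie in the anisotropic edge space dictated by $s=m-\frac12$; these matching relations are exactly the stated compatibility conditions on $\p\Ga_0\cap\p\Ga_w$ and $\p\Ga_L\cap\p\Ga_w$, and they are necessary and sufficient for a pair of face data to be realized as a trace. The right-angle meeting keeps the wedge convex, so the iterated trace is well behaved; carrying the half-space estimate through the two successive restrictions yields the local bound near each edge.

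Finally I would reassemble the pieces: summing the finitely many local estimates and using boundedness of the cutoffs $\chi_j$ produces the global inequality $\|Tf\|_{H^{m-\frac12}(\p\Om)}\le \mc_2\|f\|_{H^m(\Om)}$, while linearity of $T$ is inherited from the linearity of each local restriction. To identify $T$ with pointwise restriction on $H^m(\Om)\cap C(\ol\Om)$, I would use density of $C^\infty(\ol\Om)$ in $H^m(\Om)$: on smooth functions every local trace is the pointwise boundary value, and for $f\in H^m(\Om)\cap C(\ol\Om)$ the uniform convergence on $\ol\Om$ of a smooth approximating sequence forces $Tf=f|_{\p\Om}$, completing the construction.
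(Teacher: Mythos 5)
The paper itself offers no proof of this lemma: it is quoted as a known result, with \cite[Theorem 7.53]{RA75} supplying the smooth-boundary trace theorem and \cite{DZ96} the Lipschitz-domain version, so your proposal cannot be compared against an argument in the text; rather, it reconstructs the standard proof behind those citations. Judged on its own terms it is essentially the right route, and it correctly identifies the two real features of the situation: the target space must be taken face-by-face as $\hat{H}^{m-\frac{1}{2}}(\partial\Omega)$, since a single space $H^{m-\frac{1}{2}}(\partial\Omega)$ is not meaningful on a boundary with edges once $m\geq 2$, and the only delicate region is a neighbourhood of the right-angle edges $\partial\Gamma_0\cap\partial\Gamma_w$ and $\partial\Gamma_L\cap\partial\Gamma_w$, which the localization isolates. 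The partition-of-unity reduction, the half-space Fourier computation for the smooth pieces, and the reassembly are all standard and correct.

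Two steps in your sketch are glossed over and should be tightened. First, near an edge the cutoff piece $\chi_j f$ is defined only on the wedge $\{x_1>0\}\cap\{y>0\}$ (times the edge variable), so the half-space trace theorem does not apply to it directly: before tracing on $\{x_1=0\}$ you must extend $\chi_j f$ across the face $\{y=0\}$ to the full half-space, which is legitimate (the wedge is Lipschitz, so a Stein extension exists, and for the right angle an even reflection suffices for any fixed $m$), but your phrase ``carrying the half-space estimate through the two successive restrictions'' silently presupposes this. Second, the claim that the edge matching relations are ``necessary and sufficient for a pair of face data to be realized as a trace'' is an overclaim at low regularity: when $s=m-\frac{1}{2}$ equals $\frac{1}{2}$ the face traces admit no restriction to the one-dimensional edge, and the compatibility condition is not pointwise matching but an integral condition of $H^{1/2}_{00}$ type --- precisely the subtlety addressed in \cite{DZ96}, and the reason the lemma leaves the conditions unspecified, ``depending on $s$''. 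Since the lemma asserts only boundedness of $T$ together with $Tf=f|_{\partial\Omega}$, not surjectivity onto the compatible data, this does not invalidate your proof of the stated estimate, but the sufficiency assertion should be dropped or restricted to large $s$. Similarly, your final density argument uses uniform convergence on $\overline{\Omega}$ of an $H^m$-approximating sequence, which follows from Sobolev embedding only for $m\geq 2$ in three dimensions; for the values of $m$ actually used in this paper ($m=2,3$) that is harmless, but as written the identification $Tf=f|_{\partial\Omega}$ is unjustified for $m=1$.
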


Next, we  give some properties for the Sobolev space.

\begin{lemma}\label{theoremfg}
    Let $f \in H^p(\Omega)$  and $g \in H^q(\Omega)$, and let $ r \leq min\{p,q\}$ and $ r < p + q - \frac{3}{2} $. Then $fg \in H^r(\Omega)$ and there exists a constant $\mc_3$ such that
\begin{equation}\nonumber
    \|fg\|_{r,\Omega} \leq {\mc}_3\|f\|_{p,\Omega} \|g\|_{q,\Omega}.
\end{equation}
\end{lemma}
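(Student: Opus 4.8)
The plan is to reduce the statement on the cylinder $\Omega=(0,L)\times\Sigma$ to the corresponding multiplication inequality on the whole space $\mathbb{R}^3$, prove it there by a Fourier-analytic argument, and then transfer back. Since $\Sigma$ has smooth boundary, $\Omega$ is a bounded Lipschitz domain: its only singularities are the Lipschitz edges where the end caps $\Gamma_0,\Gamma_L$ meet the wall $\Gamma_w$. Hence there is a universal Stein-type extension operator $E$ which is bounded simultaneously as $H^s(\Omega)\to H^s(\mathbb{R}^3)$ for every exponent $s$ occurring in the statement. Extending $f$ and $g$ and using $\|fg\|_{r,\Omega}\le\|(Ef)(Eg)\|_{H^r(\mathbb{R}^3)}$ together with $\|Ef\|_{H^p(\mathbb{R}^3)}\le C\|f\|_{p,\Omega}$ and $\|Eg\|_{H^q(\mathbb{R}^3)}\le C\|g\|_{q,\Omega}$, it suffices to establish
\begin{equation}\nonumber
\|FG\|_{H^r(\mathbb{R}^3)}\le C\,\|F\|_{H^p(\mathbb{R}^3)}\,\|G\|_{H^q(\mathbb{R}^3)}
\end{equation}
whenever $r\le\min\{p,q\}$ and $r<p+q-\tfrac32$.

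For the full-space estimate I would use the Bessel-potential characterization $\|F\|_{H^s}=\|\langle D\rangle^s F\|_{L^2}$ with $\langle\xi\rangle=(1+|\xi|^2)^{1/2}$. Writing $F=\langle D\rangle^{-p}\Phi$ and $G=\langle D\rangle^{-q}\Psi$ with $\Phi,\Psi\in L^2$, the claim reduces to the $L^2\times L^2\to L^2$ boundedness of the bilinear operator which, on the Fourier side, is convolution against the kernel
\begin{equation}\nonumber
\mathcal K(\xi,\eta)=\frac{\langle\xi\rangle^{\,r}}{\langle\xi-\eta\rangle^{\,p}\,\langle\eta\rangle^{\,q}}.
\end{equation}
I would then split frequency space into the three Littlewood--Paley regimes. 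In the low--high regime $\langle\xi-\eta\rangle\lesssim\langle\eta\rangle$ one has $\langle\xi\rangle\lesssim\langle\eta\rangle$, so $\mathcal K\lesssim\langle\xi-\eta\rangle^{-p}\langle\eta\rangle^{\,r-q}$, and the factor $\langle\eta\rangle^{\,r-q}$ is bounded because $r\le q$; symmetrically, the high--low regime uses $r\le p$. Each of these two pieces is then dominated pointwise and controlled by Young's (equivalently Schur's) inequality. The decisive regime is the high--high one, $\langle\xi-\eta\rangle\sim\langle\eta\rangle\gg\langle\xi\rangle$, where $\mathcal K\lesssim\langle\xi\rangle^{\,r}\langle\eta\rangle^{-(p+q)}$ and the $L^2$-summation over the coincident large frequencies converges precisely when $p+q-r>\tfrac32$; here the threshold $\tfrac32=\tfrac n2$ with $n=3$ and the strict inequality $r<p+q-\tfrac32$ are exactly what guarantee $L^2$-integrability of the kernel over $\mathbb{R}^3$. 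Collecting the three bounds yields the constant $\mc_3$.

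I expect the high--high interaction to be the only genuine obstacle: it is the term that forces the dimensional restriction $r<p+q-\tfrac32$ and the strictness, and it must be treated by a real $L^2$ kernel bound (a Schur test, or Cauchy--Schwarz across the dyadic blocks) rather than the crude pointwise domination that suffices in the other two regimes. A secondary technical point is to fix a \emph{single} extension operator valid for all of $p,q,r$ at once on the edged cylinder, which is supplied by the universal Lipschitz extension. For the integer values of $p,q,r$ actually used later one may bypass the Fourier machinery altogether: expand $D^\alpha(fg)$ by the Leibniz rule and estimate each term $D^\beta f\,D^{\alpha-\beta}g$ by Hölder's inequality combined with Gagliardo--Nirenberg interpolation on $\Omega$, the borderline embedding $H^s(\Omega)\hookrightarrow L^\infty(\Omega)$ for $s>\tfrac32$ again accounting for the $\tfrac32$ threshold.
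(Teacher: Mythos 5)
The paper itself contains no proof of this lemma: it is quoted as a standard preliminary (a Sobolev multiplication theorem, available e.g.\ in the references the paper leans on), so your proposal can only be measured against the standard argument it reconstructs. Your overall architecture is the right one — a Stein extension off the Lipschitz cylinder (the edges where $\Gamma_0,\Gamma_L$ meet $\Gamma_w$ are indeed only Lipschitz, so this reduction is exactly the correct move), then the bilinear estimate on $\mathbb{R}^3$ via the kernel $\mathcal{K}(\xi,\eta)=\langle\xi\rangle^{r}\langle\xi-\eta\rangle^{-p}\langle\eta\rangle^{-q}$ and a three-regime frequency splitting — and your high--high analysis, where $r<p+q-\frac{3}{2}$ enters, is correct.

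There is, however, a genuine error in your low--high and high--low regimes. You bound $\langle\eta\rangle^{r-q}\le 1$ using $r\le q$ and then claim the remaining kernel $\langle\xi-\eta\rangle^{-p}$ is handled by Young's (or Schur's) inequality; but that requires $\int_{\mathbb{R}^3}\langle\zeta\rangle^{-2p}\,d\zeta<\infty$, i.e.\ $p>\frac{3}{2}$, which the hypotheses do not provide. Concretely, $(p,q,r)=(1,2,1)$ is admissible ($r\le\min\{p,q\}$ and $1<\frac{3}{2}$), yet $\langle\cdot\rangle^{-1}\notin L^2(\mathbb{R}^3)$, so after discarding the factor $\langle\eta\rangle^{r-q}$ neither Young nor any Schur test can close the estimate. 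The repair is standard: keep both factors and apply Cauchy--Schwarz in $\eta$ with the full kernel,
\begin{equation*}
\Bigl|\int \mathcal{K}(\xi,\eta)\,\hat{\Phi}(\xi-\eta)\,\hat{\Psi}(\eta)\,d\eta\Bigr|^2 \le \Bigl(\int \mathcal{K}(\xi,\eta)^2\,d\eta\Bigr)\int |\hat{\Phi}(\xi-\eta)|^2\,|\hat{\Psi}(\eta)|^2\,d\eta,
\end{equation*}
after which integration in $\xi$ gives the bilinear bound provided $\sup_\xi\int\langle\xi-\eta\rangle^{-2p}\langle\eta\rangle^{-2(q-r)}\,d\eta<\infty$, and this holds precisely because $2p+2(q-r)>3$. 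In other words, the threshold $r<p+q-\frac{3}{2}$ is consumed in \emph{all three} regimes, not only the high--high one, contrary to your remark that the high--high interaction is the only genuine obstacle. Two smaller caveats: the bound $\langle\xi\rangle^{r}\lesssim\langle\eta\rangle^{r}$ in the low--high regime presupposes $r\ge 0$ (true for the integer exponents the paper actually uses, but it should be said), and your closing observation is apt — for the cases invoked later ($p,q,r\in\{1,2,3\}$) the Leibniz--H\"older--Gagliardo--Nirenberg argument directly on $\Omega$, with $H^s(\Omega)\hookrightarrow L^\infty(\Omega)$ for $s>\frac{3}{2}$, is the simplest complete proof and bypasses the extension and Fourier machinery entirely.
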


\begin{lemma}\label{theoremH3}
    For the Sobolev's space $H^2(\Om)$  and $H^3(\Omega)$ , one has
\begin{equation}\nonumber
    H^2(\Omega) = L^2([0,L];H^2(\Sigma)) \cap H^1([0,L];H^1(\Sigma)) \cap H^2([0,L];L^2(\Sigma)).
\end{equation}
and
\begin{equation}\nonumber
    H^3(\Omega) = L^2([0,L];H^3(\Sigma)) \cap H^1([0,L];H^2(\Sigma)) \cap H^2([0,L];H^1(\Sigma)) \cap  H^3([0,L];L^2(\Sigma)).
\end{equation}
\end{lemma}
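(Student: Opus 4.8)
The plan is to prove both stated identities simultaneously by establishing, for each integer $m$ (the lemma being the cases $m=2$ and $m=3$), the exact norm identity
\[
\|f\|_{H^m(\Omega)}^2 = \sum_{\ell=0}^{m}\|\p_1^\ell f\|_{L^2([0,L];H^{m-\ell}(\Sigma))}^2 ,
\]
and then reading off the intersection on the right-hand side of the statement as a purely combinatorial bookkeeping step. Throughout I write $x_1$ for the axial variable, $x'=(x_2,x_3)$ for the transversal variable, $\p_1$ for the axial derivative and $D_{x'}^{\beta'}$ for transversal derivatives, and I regard $f$ as the map $x_1\mapsto f(x_1,\cdot)$ valued in Sobolev spaces over $\Sigma$.

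First I would derive the norm–splitting identity. The integer-order norm $\|f\|_{H^m(\Omega)}^2=\sum_{|\beta|\le m}\|D^\beta f\|_{L^2(\Omega)}^2$ is reorganized by separating each multi-index $\beta=(\beta_1,\beta')$ into its axial part $\beta_1=\ell$ and its transversal part $\beta'=(\beta_2,\beta_3)$, so that $|\beta|=\ell+|\beta'|$; grouping by $\ell$ and applying Fubini's theorem gives
\[
\sum_{|\beta'|\le m-\ell}\|\p_1^\ell D_{x'}^{\beta'}f\|_{L^2(\Omega)}^2
=\int_0^L\|\p_1^\ell f(x_1,\cdot)\|_{H^{m-\ell}(\Sigma)}^2\,dx_1
=\|\p_1^\ell f\|_{L^2([0,L];H^{m-\ell}(\Sigma))}^2 .
\]
Summing over $\ell=0,\dots,m$ yields the displayed identity, so that $f\in H^m(\Omega)$ if and only if $\p_1^\ell f\in L^2([0,L];H^{m-\ell}(\Sigma))$ for every $\ell=0,\dots,m$.

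Next I would identify this finite list of conditions with the stated intersection. By definition $f\in H^k([0,L];H^{m-k}(\Sigma))$ means $\p_1^j f\in L^2([0,L];H^{m-k}(\Sigma))$ for all $j\le k$, so membership in $\bigcap_{k=0}^{m}H^k([0,L];H^{m-k}(\Sigma))$ is exactly the requirement that $\p_1^j f\in L^2([0,L];H^{m-k}(\Sigma))$ for every pair $0\le j\le k\le m$. For fixed $j$ the strongest of these is the one with $k=j$, namely $\p_1^j f\in L^2([0,L];H^{m-j}(\Sigma))$, and the nesting $H^{m-j}(\Sigma)\hookrightarrow H^{m-k}(\Sigma)$ for $k\ge j$ renders all the remaining conditions automatic. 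Hence the intersection collapses to precisely the list $\{\p_1^\ell f\in L^2([0,L];H^{m-\ell}(\Sigma)):\ell=0,\dots,m\}$ produced in the previous step, with equivalent norms, which closes the argument.

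The one point requiring genuine care — and the step I expect to be the main obstacle — is the identification of the distributional $x_1$–derivative of $f$ taken on $\Omega$ with the Bochner (vector-valued) weak derivative of the map $x_1\mapsto f(x_1,\cdot)$ into $H^{m-\ell}(\Sigma)$; the Fubini manipulation above is transparent only for smooth functions. I would therefore first verify the norm identity on $C^\infty(\overline{\Omega})$, then pass to the limit using the density of $C^\infty(\overline{\Omega})$ in $H^m(\Omega)$, which holds since $\Sigma$ has smooth boundary and hence $\Omega$ has the segment property. To match the two notions of weak derivative for a general $H^m$ function I would test against tensor-product functions $\varphi(x_1)\psi(x')$: as finite linear combinations of such products are dense in the relevant test-function space, equality of the scalar distributional derivative on $\Omega$ and the Bochner derivative follows from Fubini applied to each product together with a density argument. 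Once this equivalence of derivative notions is secured, the norm identity transfers verbatim from $C^\infty(\overline{\Omega})$ to all of $H^m(\Omega)$, and the lemma follows for $m=2$ and $m=3$.
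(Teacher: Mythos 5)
The paper offers no proof of this lemma at all: it is stated as a known fact about Sobolev spaces on the product domain $\Omega=(0,L)\times\Sigma$ (in the spirit of the background material borrowed from \cite{SS16}), so there is no argument of the authors' to compare yours against. On its own merits your proof is correct and is the natural one. The norm identity $\|f\|_{H^m(\Omega)}^2=\sum_{\ell=0}^m\|\p_1^\ell f\|_{L^2([0,L];H^{m-\ell}(\Sigma))}^2$ is exact, since the multi-indices $|\beta|\le m$ are in bijection with pairs $(\ell,\beta')$, $\ell+|\beta'|\le m$; and your observation that the intersection $\bigcap_{k=0}^m H^k([0,L];H^{m-k}(\Sigma))$ collapses to the diagonal conditions $\p_1^j f\in L^2([0,L];H^{m-j}(\Sigma))$, via the embeddings $H^{m-j}(\Sigma)\hookrightarrow H^{m-k}(\Sigma)$ for $k\ge j$, is exactly right. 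You also correctly isolate the only genuinely delicate point, the identification of the scalar distributional derivative on $\Omega$ with the Bochner weak derivative of $x_1\mapsto f(x_1,\cdot)$; your plan (verify on $C^\infty(\ol{\Omega})$, use the segment property for density, and test against tensor products $\varphi(x_1)\psi(x')$, whose linear span suffices to determine a distribution on the product domain) is the standard and correct route. Two small points you could make explicit, though neither is a gap: strong measurability of $x_1\mapsto f(x_1,\cdot)$ into $H^{m-\ell}(\Sigma)$ follows from Pettis' theorem together with separability of $H^{m-\ell}(\Sigma)$ and Fubini applied to each $D^{\beta'}f\in L^2(\Omega)$; and when collapsing the intersection you implicitly use that Bochner weak derivatives are preserved under the continuous embeddings $H^{m-j}(\Sigma)\hookrightarrow H^{m-k}(\Sigma)$, so the various notions of $\p_1^j f$ valued in different target spaces coincide.
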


\begin{definition}
    The space $C([0,L];X)$ consists of all continuous functions $f: [0,L] \to X$ and is equipped with the norm $\|f\|_{C([0,L];X)} = \max_{t \in [0,L]}\|f(t)\|_{X}$.
    \par The space $ C_w([0,L];X)$ consists of all weakly continuous functions $f: [0,L] \to X$, that is, functions $f$ such that $t \to l(f(t))$ is continuous for any bounded linear functional $l \in X^{\prime}$.
\end{definition}

\begin{definition}
If $h$ is defined on $\Sigma$, let
\begin{equation*}
    \text{ext}_0[h](x) =
    \begin{cases}
        h(x), \quad &{\rm{if}} \quad x \in \Sigma,\\
        0,\quad &\text{if} \quad x \notin \Sigma.
    \end{cases}
\end{equation*}
This allows us to define the space
\begin{equation*}
    H^{1/2}_{00}(\Sigma) = \{ h\in H^{1/2}(\Sigma):\text{ext}_0[h] \in H^{1/2}(\mbR^2) \}.
\end{equation*}
\end{definition}

\section{Existence and uniqueness of steady subsonic  flows}\noindent

\par In this section,  we will use the deformation-curl decomposition introduced in Section 2 to construct  a  subsonic flow which belongs to $ H^3(\Omega) $.

\subsection{ Proof of Theorem 1.1}\noindent
\par Define
\begin{align*}
     V_1 = u_1 -\bar u, \quad V_i = u_i, i = 2,3, \quad V_4 = B-\bar B,\quad V_5 = K- \bar K.
\end{align*}
We develop an iteration
as follows. Denote the solution space by
\begin{align}\nonumber
 \Xi_\delta = \bigg\{ \textbf{V} =(V_1,V_2,V_3,V_4,V_5):\sum_{i = 1}^{5} \| V_i\| _{3,\Omega} \leq \delta, \ V_2 n_2 + V_3 n_3=0\ \ \text{on }\Gamma_w\bigg\}.
 \end{align}

 Choose  any $ \bar{\textbf{V}} \in \Xi_\delta$, we will construct a new $\textbf{V}$ by the following three steps.

\par{\bf Step 1. Resolving the Bernoulli's quantity and the entropy.} In this step, we solve the transport equations
\begin{align}\label{3-7}
    \begin{cases}
        \p_1V_4 + \frac{\bar{V}_2}{\bar u + \bar{V}_1}\p_2V_4 + \frac{\bar{V}_3}{\bar u + \bar{V}_1}\p_3V_4 = 0,\\
        V_4(0,x^{\prime}) = \sigma B_{0}(x^{\prime}),
    \end{cases}
\end{align}
and
\begin{align}\label{3-8}
    \begin{cases}
         \p_1V_5 + \frac{\bar{V}_2}{\bar u + \bar{V}_1}\p_2V_5 + \frac{\bar{V}_3}{\bar u + \bar{V}_1}\p_3V_5 = 0,\\
        V_5(0,x^{\prime} )= \sigma K_{0}(x^{\prime}).
    \end{cases}
\end{align}
   To solve \eqref{3-7} and \eqref{3-8}, we first consider  the following transport eqaution:
\begin{align}\label{3-9}
    \begin{cases}
        \p_1f + (w_2,w_3) \cdot \nabla_{x^{\prime}} f = 0,\\
        f|_{x_1 = 0} = \eta(x^{\prime}).
    \end{cases}
\end{align}
\par For the equation \eqref{3-9}, we have the following conclusion:
\begin{lemma}\label{proposition3-1}
    Assume that $\mathbf{w}=(w_2,w_3) \in C^{\infty}(\ol{\Omega})$ satisfying $w_2n_2 + w_3 n_3=0$ on $\Ga_w$, and $f \in C^{\infty}(\Omega)$ with $f|_{x_1 =\xi} \in C^{\infty}_0(\Sigma)$ for all $\xi \in [0,L]$, then the solution $ f $ of \eqref{3-9}  satisfies
\begin{equation}\label{3-10}
\|f\|_{L^{\infty}([0,L];H^3(\Sigma))} +\|f\|_{L^2([0,L];H^3(\Sigma))}\leq C\|\eta\|_{3,\Sigma},
  \end{equation}
    where $C = C(L,\Sigma,\|\mathbf{w}\|_{3,\Omega})$.
\end{lemma}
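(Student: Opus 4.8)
The plan is to treat \eqref{3-9} as a linear transport equation in which $x_1$ plays the role of a time variable and the first-order operator $\w\cdot\na_{x'}$ acts only in the cross-section $\Si$, and then to run a tangential energy estimate slice by slice in $x_1$. The structural fact that makes everything work is that the condition $w_2n_2+w_3n_3=0$ on $\Ga_w$ says exactly that $\w=(w_2,w_3)$ is tangent to $\p\Si$ at every height, equivalently that the characteristic field $(1,w_2,w_3)$ is tangent to $\Ga_w$; hence the $x_1$-flow carries $\Si$ diffeomorphically onto itself, the characteristics never reach the lateral wall, and the slicewise compact support $f|_{x_1=\xi}\in C_0^\infty(\Si)$ is propagated for all $\xi\in[0,L]$.

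First I would establish the base $L^2(\Si)$ bound. Multiplying \eqref{3-9} by $f$ and integrating over $\Si$ gives
\[
\frac{d}{dx_1}\int_\Si \tfrac12 f^2\,dx' = -\int_\Si \w\cdot\na_{x'}\big(\tfrac12 f^2\big)\,dx'.
\]
Integrating by parts in $x'$ and using both $\w\cdot\nn=0$ on $\p\Si$ and the compact support of $f$ annihilates the boundary integral and leaves $\frac{d}{dx_1}\|f(x_1)\|_{L^2(\Si)}^2=\int_\Si(\na_{x'}\cdot\w)f^2\,dx'$, so Gronwall yields $\|f(x_1)\|_{L^2(\Si)}\le e^{CL}\|\eta\|_{0,\Si}$ with $C$ depending on $\|\na_{x'}\cdot\w\|_{L^\infty}$.

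Next I would propagate this to third order. For a cross-sectional multi-index $\al$ with $|\al|\le 3$, applying $\p_{x'}^\al$ to \eqref{3-9} gives $\p_1(\p_{x'}^\al f)+\w\cdot\na_{x'}(\p_{x'}^\al f)=-[\p_{x'}^\al,\w\cdot\na_{x'}]f$. The identical energy manipulation — multiply by $\p_{x'}^\al f$, integrate over $\Si$, integrate by parts using $\w\cdot\nn=0$ together with the compact support of $\p_{x'}^\al f$ so that all $\p\Si$ contributions again vanish — produces
\[
\frac{d}{dx_1}\|\p_{x'}^\al f\|_{L^2(\Si)}^2\le \|\na_{x'}\cdot\w\|_{L^\infty}\|\p_{x'}^\al f\|_{L^2(\Si)}^2+2\big\|[\p_{x'}^\al,\w\cdot\na_{x'}]f\big\|_{L^2(\Si)}\|\p_{x'}^\al f\|_{L^2(\Si)}.
\]
Expanding the commutator by the Leibniz rule and using Moser-type product estimates (Lemma \ref{theoremfg}) together with the embedding $H^3(\Om)\hookrightarrow C_b^1(\Om)$ to dominate $\|\w\|_{L^\infty}$ and $\|\na_{x'}\w\|_{L^\infty}$ by $\|\w\|_{3,\Om}$, I would obtain $\sum_{|\al|\le3}\|[\p_{x'}^\al,\w\cdot\na_{x'}]f\|_{L^2(\Si)}\le C(\|\w\|_{3,\Om})\|f(x_1)\|_{3,\Si}$. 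Summing over $|\al|\le 3$, feeding this into the differential inequality for $\sum_{|\al|\le3}\|\p_{x'}^\al f(x_1)\|_{L^2(\Si)}^2$, and applying Gronwall gives the uniform bound $\|f(x_1)\|_{3,\Si}\le e^{CL}\|\eta\|_{3,\Si}$, i.e. the $L^\infty([0,L];H^3(\Si))$ estimate; integrating its square over $x_1\in[0,L]$ then yields the $L^2([0,L];H^3(\Si))$ estimate with an extra factor $\sqrt L$.

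The step I expect to be the main obstacle is the top-order commutator bookkeeping at $|\al|=3$: one must check that every term of $[\p_{x'}^\al,\w\cdot\na_{x'}]f$ can be estimated in $L^2(\Si)$ by the product lemma so that the right-hand side closes at the level of $\|f\|_{3,\Si}$ and never forces a fourth derivative of $f$ or an uncontrolled third derivative of $\w$ paired with $\na_{x'}f$. The tangency $\w\cdot\nn=0$ and the slicewise compact support of $f$ are precisely what guarantee the vanishing of all curved-boundary integrals generated by these integrations by parts; without them boundary terms on $\p\Si$ would survive and the clean Gronwall argument would fail — which is exactly the regularity difficulty near $\Ga_0\cap\Ga_w$ and $\Ga_L\cap\Ga_w$ that the separation-of-variables device is later reserved to handle.
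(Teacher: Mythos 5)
Your proposal is correct and follows essentially the same route as the paper: tangential energy estimates on each slice up to third order, with the boundary terms on $\p\Sigma$ killed by the tangency $w_2n_2+w_3n_3=0$ together with the slicewise compact support of $f$, commutator/product bounds closing at the level of $\|f\|_{3,\Sigma}$, then Gronwall and integration in $x_1$. The only cosmetic difference is that the paper keeps the Gronwall coefficient as the slicewise norm $\|\mathbf{w}(x_1,\cdot)\|_{3,\Sigma}$ (which is merely integrable in $x_1$, since the top-order term $\p_{x'}^3\mathbf{w}\cdot\nabla_{x'}f$ is not uniformly bounded in $x_1$ by $\|\mathbf{w}\|_{3,\Omega}$), whereas you state a uniform-in-$x_1$ bound; Gronwall with an $L^1$-in-$x_1$ coefficient repairs this immediately and yields the same constant $C(L,\Sigma,\|\mathbf{w}\|_{3,\Omega})$.
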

\begin{proof}
First,
to get \eqref{3-10}, we need  the following inequality:
\begin{equation}\label{3-12}
\frac{d}{d x_1}\|f(x_1,\cdot)\|_{3,\Sigma}^2 \leq C(\Sigma) \| \mathbf{w}(x_1,\cdot)\|_{3,\Sigma}\|f(x_1,\cdot)\|_{3,\Sigma}^2.
\end{equation}
      Once \eqref{3-12} is obtained, then applying the Gronwall's inequality, one has
\begin{equation}\nonumber
    \sup_{x_1 \in [0,L]}\|f\|_{3,\Sigma}^2 \leq C(L,\Sigma,\| \mathbf{w}\|_{3,\Omega}) \| \eta\|_{3,\Sigma}^2.
\end{equation}
which proves $f \in L^{\infty}([0,L];H^3(\Sigma)).$ Thus
\begin{align} \nonumber
    \|f\|_{L^2([0,L];H^3(\Sigma))}^2 &=  \int_0^L \|f\|_{3,\Sigma}^2 d x_1
      \leq \int_0^L \sup_{x_1 \in [0,L]}\|f\|_{3,\Sigma}^2 d x_1\\\nonumber
     & \leq  C(L,\Sigma,\|\mathbf{w}\|_{3,\Omega}) \int_0^L  \| \eta\|_{3,\Sigma}^2 d x_1
     \leq  C(L,\Sigma,\|\mathbf{w}\|_{3,\Omega}) \| \eta\|_{3,\Sigma}^2,
\end{align}
which implies \( f \in L^2([0,L];H^3(\Sigma))\).
     \par Next, we prove \eqref{3-12}.
\begin{enumerate}[ \rm (1)]
\item Note that
     \begin{equation*}
    \frac{d}{d x_1}\int_{\Sigma}|f|^2 d x^{\prime} = -2 \int_{\Sigma} f\cdot (\mathbf{w}\cdot \nabla_{x^{\prime}} ) f d x^{\prime} =  \int_{\Sigma} f^2 \nabla_{x^{\prime}}\cdot \mathbf{w} d x^{\prime},
    \end{equation*}
where the boundary term disappears since \( f|_{x_1 =\xi } \in C_0^{\infty}(\Sigma)\). Hence one has
\begin{equation}\nonumber
    \frac{d}{dx_1}\|f(x_1,\cdot)\|_{0,\Sigma}^2 \leq \|\nabla_{x^{\prime}}\cdot \mathbf{w}(x_1,\cdot)\|_{C_b(\Sigma)}\|f(x_1,\cdot)\|_{0,\Sigma}^2 \leq C(\Sigma) \| \mathbf{w}(x_1,\cdot)\|_{3,\Sigma}\|f(x_1,\cdot)\|_{0,\Sigma}^2.
\end{equation}
   \item  Differentiating equation \eqref{3-9} with respect to \(x_i, i=2,3\) gives
   \begin{equation}\label{3-14}
    \p_1\p_if+\mathbf{w} \cdot \nabla_{x^{\prime}}\left(\p_if\right) + \p_i\mathbf{w}\cdot\nabla_{x^{\prime}} f = 0,\quad i = \text{2, 3}.
   \end{equation}
   A direct computation yields that
   \begin{equation*}
   \begin{aligned}
      &\frac{d}{d x_1}\int_{\Sigma}|\nabla_{x^{\prime}}f(x_1,\cdot)|^2 d x^{\prime} \\
    &\leq C(\Sigma)\|\nabla_{x^{\prime}}\cdot \mathbf{w}\|_{C_b(\Sigma)}\|\nabla_{x^{\prime}}f(x_1,\cdot)\|_{0,\Sigma} + C\|\nabla_{x^{\prime}}\mathbf{w}(x_1,\cdot)\|_{C_b(\Sigma)}\|\nabla_{x^{\prime}}f(x_1,\cdot)\|_{0,\Sigma}^2\\[2ex]
    &\leq   C(\Sigma) \| \mathbf{w}(x_1,\cdot)\|_{3,\Sigma}\|f(x_1,\cdot)\|_{1,\Sigma}^2.
   \end{aligned}
   \end{equation*}
     \item Differentiating equation \eqref{3-14} with respect to \(x_j, j=2,3\) yields
   \begin{equation}\label{3-15}
    \p_1\p^2_{ij}f+\mathbf{w} \cdot \nabla_{x^{\prime}}\left(\p^2_{ij}f\right) + \p_i\mathbf{w}\cdot\nabla_{x^{\prime}}\p_jf + \p_j\mathbf{w}\cdot\nabla_{x^{\prime}}\p_if+\p_{ij}^2\mathbf{w}\cdot\nabla_{x^{\prime}} f = 0.
    \end{equation}
    Then one gets
    \begin{equation*}
   \begin{aligned}
        &\frac{d}{dx_1}\|\nabla_{x^\prime}^2 f(x_1,\cdot)\|_{0,\Sigma}^2\\
       &\leq  \|\nabla_{x^{'}}\cdot \mathbf{w}\|_{C_b(\Sigma)}\|\nabla_{x^\prime}^2 f(x_1,\cdot)\|_{0,\Sigma}^2
       + \|\nabla_{x^\prime} \mathbf{w}(x_1,\cdot)\|_{C_b(\Sigma)}\| \nabla^2_{x^{\prime}}f\|_{0,\Sigma}\|\nabla_{x^\prime}^2 f\|_{0,\Sigma}\\[2ex]
        &\quad+\|\nabla^2_{x^{\prime}} \mathbf{w}(x_1,\cdot)\|_{1,\Sigma}\|\nabla_{x^{\prime}}f\|_{1,\Sigma}\|\nabla^2_{x^{\prime}} f(x_1,\cdot)\|_{0,\Sigma} \\[2ex]
        &\leq   C(\Sigma) \| \mathbf{w}(x_1,\cdot)\|_{3,\Sigma}\|f(x_1,\cdot)\|_{2,\Sigma}^2.
  \end{aligned}
   \end{equation*}
  \item Differentiating equation \eqref{3-15} with respect to \(x_k,k=2,3\) leads to
\begin{equation}\nonumber
\begin{aligned}
   & \p_1\left(\p_{ijk}^3 f\right)+\mathbf{w} \cdot \nabla_{x^{\prime}}\left(\p_{ijk}^3 f\right) + \p_i \mathbf{w} \cdot \nabla_{x^{\prime}} (\p^2_{jk}f)+\p_j \mathbf{w} \cdot \nabla_{x^{\prime}} (\p^2_{ik}f)+\p_k \mathbf{w} \cdot \nabla_{x^{\prime}} (\p^2_{ij}f)  \\
  & + \p_{ij}^2 \mathbf{w} \cdot \nabla_{x^{\prime}} \left( \p_k f\right)+\p_{ik}^2 \mathbf{w} \cdot \nabla_{x^{\prime}} \left( \p_j f\right)+ \p_{jk}^2 \mathbf{w} \cdot \nabla_{x^{\prime}} \left( \p_i f\right)+\p_{ijk}^3 \mathbf{w}\cdot\nabla_{x^{\prime}} f = 0.
\end{aligned}
\end{equation}
A direct calculation gives that
 \begin{equation*}
   \begin{aligned}
        &\frac{d}{dx_1}\|\nabla_{x^\prime}^3 f(x_1,\cdot)\|_{0,\Sigma}^2\\
       &\leq  \|\nabla_{x^{\prime}}\cdot \mathbf{w}\|_{C_b(\Sigma)}\|\nabla_{x^\prime}^3 f\|_{0,\Sigma}^2 + 3\|\nabla_{x^{\prime}}\mathbf{w}\|_{C_b(\Sigma)}\|\nabla_{x^\prime}^3 f\|_{0,\Sigma}\|\nabla_{x^\prime}^3 f\|_{0,\Sigma}\\[2ex]
     &\quad+3\|\p_2^2 \mathbf{w}\|_{1,\Sigma}\|\nabla_{x^\prime}^3 f \|_{1,\Sigma}\|\nabla_{x^\prime}^3 f\|_{0,\Sigma}    +\|\nabla_{x^{\prime}}f\|_{2,\Sigma}\| \mathbf{w} \| _{3,\Sigma}\|\nabla_{x^\prime}^3 f\|_{0,\Sigma}  \\[2ex]
          &\leq   C(\Sigma) \| \mathbf{w}(x_1,\cdot)\|_{3,\Sigma}\|f(x_1,\cdot)\|_{3,\Sigma}^2.
   \end{aligned}
   \end{equation*}
\end{enumerate}
   Therefore,  the inequality  \eqref{3-12} is proved.

\end{proof}
\begin{lemma}\label{lemma3-2}
    Assume that $\mathbf{w}=(w_2,w_3) \in C^{\infty}(\ol{\Omega})$ satisfying $w_2n_2 + w_3 n_3=0$ on $\Ga_w$, and $f \in C^{\infty}(\Omega)$ with $f|_{x_1 =\xi} \in C^{\infty}_0(\Sigma)$ for all $\xi \in [0,L]$, then the solution $ f $ of \eqref{3-9}  satisfies
\begin{equation}\label{3-h1}
\|f\|_{W^{1,\infty}([0,L];H^2(\Sigma))}+\|f\|_{H^1([0,L];H^2(\Sigma))}\leq C\|\eta\|_{3,\Sigma},
  \end{equation}
  and
    \begin{equation}\label{3-h2}
 \|f\|_{H^2([0,L];H^1(\Sigma))} \leq  C \|\eta\|_{3,\Sigma},
    \end{equation}
    and
    \begin{equation}\label{3-11}
 \|f\|_{H^3([0,L];L^2(\Sigma))} \leq  C \|\eta\|_{3,\Sigma},
    \end{equation}
    where $C = C(\Sigma,L,\|\mathbf{w}\|_{3,\Omega})$.
\end{lemma}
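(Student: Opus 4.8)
The plan is to exploit the transport equation \eqref{3-9} itself in order to trade every $x_1$-derivative for tangential derivatives, and then to close all three estimates using the $x^{\prime}$-regularity already supplied by Lemma \ref{proposition3-1} together with the anisotropic characterization of $H^3(\Omega)$ in Lemma \ref{theoremH3}. Writing $L := \mathbf{w}\cdot\nabla_{x^{\prime}}$ for the tangential transport operator (which depends on $x_1$ only through $\mathbf{w}$), equation \eqref{3-9} reads $\partial_1 f = -Lf$. Differentiating in $x_1$ and using $\partial_1 L = (\partial_1\mathbf{w})\cdot\nabla_{x^{\prime}}$ and $\partial_1^2 L = (\partial_1^2\mathbf{w})\cdot\nabla_{x^{\prime}}$, one obtains the closed expressions
\begin{equation*}
\partial_1^2 f = -(\partial_1 L)f + L^2 f,
\end{equation*}
and
\begin{equation*}
\partial_1^3 f = -(\partial_1^2 L)f + 2(\partial_1 L)Lf + L(\partial_1 L)f - L^3 f,
\end{equation*}
so that each $\partial_1^k f$ is a finite sum of terms of the form $(\partial_1^{a}\mathbf{w})\cdots(\partial_1^{c}\mathbf{w})\,\nabla_{x^{\prime}}^{j}f$, with the total order of tangential differentiation of $f$ at most $k$ and the $x_1$-derivatives distributed among the $\mathbf{w}$-factors.

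For \eqref{3-h1} I would estimate $\partial_1 f = -\mathbf{w}\cdot\nabla_{x^{\prime}}f$ in $H^2(\Sigma)$ for fixed $x_1$. Since $\Sigma\subset\mathbb{R}^2$, the space $H^2(\Sigma)$ is an algebra and $H^2(\Sigma)\hookrightarrow C_b(\Sigma)$, so the multiplication estimate analogous to Lemma \ref{theoremfg} on $\Sigma$ gives $\|\mathbf{w}\cdot\nabla_{x^{\prime}}f\|_{2,\Sigma}\le C\|\mathbf{w}\|_{2,\Sigma}\|f\|_{3,\Sigma}$. Taking the supremum, respectively the $L^2$ norm, in $x_1$ and using $\|\mathbf{w}\|_{L^{\infty}([0,L];H^2(\Sigma))}\le C\|\mathbf{w}\|_{3,\Omega}$ (from Lemma \ref{theoremH3} and the embedding $H^1([0,L];H^2(\Sigma))\hookrightarrow C([0,L];H^2(\Sigma))$) together with the bounds on $\|f\|_{L^{\infty}([0,L];H^3(\Sigma))}$ and $\|f\|_{L^2([0,L];H^3(\Sigma))}$ from Lemma \ref{proposition3-1}, both $\|f\|_{W^{1,\infty}([0,L];H^2(\Sigma))}$ and $\|f\|_{H^1([0,L];H^2(\Sigma))}$ are controlled by $C\|\eta\|_{3,\Sigma}$.

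For \eqref{3-h2} and \eqref{3-11} I would insert the expressions for $\partial_1^2 f$ and $\partial_1^3 f$ above and bound each summand in $H^1(\Sigma)$, respectively $L^2(\Sigma)$, by the Sobolev multiplication inequalities on $\Sigma$ and the embedding $H^2(\Sigma)\hookrightarrow C_b(\Sigma)$, and then integrate in $x_1$. The governing principle is that $f$ always carries the $x^{\prime}$-regularity, placed in $L^{\infty}$ or $L^2$ in $x_1$ at value $H^3(\Sigma)$ via Lemma \ref{proposition3-1}, while the $x_1$-derivatives of $\mathbf{w}$ are measured in the correct slot of Lemma \ref{theoremH3}: $\partial_1\mathbf{w}\in L^2([0,L];H^2(\Sigma))$ and $\partial_1^2\mathbf{w}\in L^2([0,L];H^1(\Sigma))$.

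The main obstacle, and the only place requiring care, is the correct pairing of the two variables in these mixed-norm products: one must decide, term by term, which factor carries the $L^{\infty}$ and which the $L^2$ norm in $x_1$, and which factor lies in $C_b(\Sigma)$ and which in $L^2(\Sigma)$, so that the product closes in $L^2(\Omega)$. For instance, in the top-order term $L^3 f$ the triple product of $\mathbf{w}$-factors is taken in $L^{\infty}([0,L];C_b(\Sigma))$ while $\nabla_{x^{\prime}}^3 f$ is taken in $L^2([0,L];L^2(\Sigma))$; by contrast, in the roughest term $(\partial_1^2\mathbf{w})\cdot\nabla_{x^{\prime}}f$ the factor $\partial_1^2\mathbf{w}$, which lies only in $L^2([0,L];H^1(\Sigma))$, must be paired with $\nabla_{x^{\prime}}f\in L^{\infty}([0,L];C_b(\Sigma))$, so that $\nabla_{x^{\prime}}f$ supplies the $L^{\infty}(\Sigma)$ bound and $\partial_1^2\mathbf{w}$ the $L^2(\Sigma)$ bound. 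Since $H^1(\Sigma)$ does not embed into $C_b(\Sigma)$, a second-order $x_1$-derivative of $\mathbf{w}$ can never be the factor placed in $L^{\infty}(\Sigma)$; this is exactly the constraint that dictates the bookkeeping. Once the pairing is fixed, each term is bounded by a fixed power of $\|\mathbf{w}\|_{3,\Omega}$ times $\|f\|_{L^{\infty}([0,L];H^3(\Sigma))}+\|f\|_{L^2([0,L];H^3(\Sigma))}$, and Lemma \ref{proposition3-1} converts the latter into $C\|\eta\|_{3,\Sigma}$, yielding \eqref{3-h2} and \eqref{3-11}.
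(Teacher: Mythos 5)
Your proposal is correct and follows essentially the same route as the paper: both use the transport equation (differentiated in $x_1$) to trade $x_1$-derivatives of $f$ for tangential derivatives, then close the estimates with the $L^{\infty}([0,L];H^3(\Sigma))\cap L^2([0,L];H^3(\Sigma))$ bounds from Lemma \ref{proposition3-1} and slicewise product estimates; the only difference is bookkeeping — the paper keeps $\partial_1 f$ and $\partial_1^2 f$ on the right-hand side and bounds them recursively, while you unfold the recursion into closed expressions in purely tangential derivatives, which is equivalent.
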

\begin{proof}
    It follows from the equation \eqref{3-9} that
    \begin{equation*}
    \begin{aligned}
           \|\p_1 f(x_1,\cdot)\|_{2,\Sigma} & = \|\mathbf{w}\cdot\nabla_{x^{\prime}}f(x_1,\cdot)\|_{2,\Sigma} \leq \|\mathbf{w}(x_1,\cdot)\|_{2,\Sigma} \|\nabla_{x^{\prime}}f(x_1,\cdot)\|_{2,\Sigma}\\
        & \leq \|\mathbf{w}(x_1,\cdot)\|_{2,\Sigma} \|f(x_1,\cdot)\|_{3,\Sigma},
    \end{aligned}
    \end{equation*}
     and  \eqref{3-h1} follows from this and \eqref{3-10}.
    \par Differentiating the equation \eqref{3-9} with respect to \( x_1\) gives
\begin{equation}\nonumber
    \p_1^2 f+\mathbf{w} \cdot \nabla_{x^{\prime}}\left( \p_1 f \right) +\p_1 \mathbf{w}\cdot\nabla_{x^{\prime}} f =0,
\end{equation}
  which implies that
   \begin{align*}
      \| \p_1^2 f\|_{1,\Sigma} &\leq \| \p_1 \mathbf{w}\cdot \nabla_{x^{'}}f\|_{1,\Sigma} + \| \mathbf{w} \cdot \nabla_{x^{'}}\p_1 f  \|_{1,\Sigma} \\[2ex]
      &\leq C(\Sigma)\left( \| \p_1 \mathbf{w}\|_{1,\Sigma}\| \nabla_{x^{\prime}}f \|_{2,\Sigma}  + \|\mathbf{w}\|_{2,\Sigma}\| \nabla_{x^{\prime}}\left(\p_1 f\right) \|_{1,\Sigma} \right)\\[2ex]
      & \leq C(\Sigma)\|\mathbf{w}\|_{2,\Sigma}\|f\|_{3,\Sigma}.
   \end{align*}
These imply \eqref{3-h2} .
\par Differentiating the equation \eqref{3-9}  with respect to \(x_1\) twice, one gets
   \begin{equation}\nonumber
    \p_1^3 f+\mathbf{w} \cdot \nabla_{x^{\prime}}\left(\p_1^2 f\right) +2\left( \p_1 \mathbf{w} \cdot \nabla_{x^{\prime}}\right)  \p_1 f+ \p_1^2 \mathbf{w}\cdot\nabla_{x^{\prime}} f = 0.
   \end{equation}
Then
        \begin{align*}
       \|\p_1^3 f \|_{0,\Sigma} & \leq \|\mathbf{w}\cdot \nabla_{x^{\prime}} \p_1^2 f\|_{0,\Sigma} +2\| \p_1\mathbf{w} \cdot \nabla_{x^{\prime}}(\p_1f) \|_{0,\Sigma} +\| \p_1^2 \mathbf{w} \cdot \nabla_{x^{\prime}}f \|_{0,\Sigma}\\[2ex]
       & \leq \|\mathbf{w}\|_{2,\Sigma}\|\nabla_{x^{\prime}}\p_1^2 f\|_{0,\Sigma} +\|\p_1\mathbf{w}\|_{2,\Sigma}\|\nabla_{x^{\prime}}\p_1 f\|_{0,\Sigma} + \|\p_1^2\mathbf{w}\|_{1,\Sigma}\|\nabla_{x^{\prime}}f\|_{1,\Sigma}\\[2ex]
        & \leq C(\Sigma)\|\mathbf{w}\|_{3,\Sigma}\|f\|_{3,\Sigma}\leq C\left(L,\Sigma,\|\mathbf{w}\|_{3,\Omega}\right) \|\mathbf{w}\|_{3,\Sigma}   \|\eta\|_{3,\Sigma},
   \end{align*}
   which implies
   \begin{align*}
       \int_0^L \|\p_1^3 f \|_{0,\Sigma}^2 dx_1 \leq \int_0^L C\left(L,\Sigma,\|\mathbf{w}\|_{3,\Omega}\right) \|\mathbf{w}\|_{3,\Sigma} ^2  \|\eta\|_{3,\Sigma}^2 dx_1\leq C\left(L,\Sigma,\|\mathbf{w}\|_{3,\Omega}\right) \|\mathbf{w}\|_{3,\Omega}^2   \|\eta\|_{3,\Sigma}^2.
 \end{align*}

 Therefore the proof of Lemma \ref{lemma3-2} is completed.
\end{proof}
\par Next, we establish the existence and uniqueness of the solution to the equation \eqref{3-9}.

\begin{proposition} \label{lemma3-5}For $\eta \in H^3_0(\Sigma)$ and $\mathbf{w}=(w_2,w_3)\in H^3(\Omega)$ satisfying $w_2n_2+w_3 n_3=0$ on $\Ga_w$, the equation \eqref{3-9} has a unique solution $ f \in H^3(\Omega)\cap L^{\infty}([0,L];H_0^3(\Sigma)) \cap W^{1,\infty}([0,L];H_0^2(\Sigma))$.
Moreover, for almost every $x_1 \in [0,L]$, the first equation in \eqref{3-9} holds as an equality in $H^2(\Sigma)$ and $f|_{x_1=0} = \eta$ in $H_0^2(\Sigma)$.
\end{proposition}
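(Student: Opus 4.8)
The plan is to obtain $f$ as a limit of classical solutions associated with smoothed data, and to transfer the a priori bounds of Lemma \ref{proposition3-1} and Lemma \ref{lemma3-2} to the limit by weak lower semicontinuity. First I would set up approximating sequences: since $\eta\in H_0^3(\Sigma)$ is the $H^3$-closure of $C_0^\infty(\Sigma)$, choose $\eta^\varepsilon\in C_0^\infty(\Sigma)$ with $\eta^\varepsilon\to\eta$ in $H^3(\Sigma)$; and approximate $\mathbf{w}$ by smooth fields $\mathbf{w}^\varepsilon\in C^\infty(\ol{\Omega})$ that remain tangent to the wall, $w_2^\varepsilon n_2+w_3^\varepsilon n_3=0$ on $\Ga_w$, with $\mathbf{w}^\varepsilon\to\mathbf{w}$ in $H^3(\Omega)$. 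This tangency-preserving smoothing is available because smooth tangent fields are dense in the space of $H^3$ tangent fields on the smooth domain $\Omega$ (concretely, one splits $\mathbf{w}$ near $\Ga_w$ into normal and tangential parts in boundary-adapted coordinates and mollifies the components separately). For each $\varepsilon$, equation \eqref{3-9} with data $(\mathbf{w}^\varepsilon,\eta^\varepsilon)$ is solved classically by the method of characteristics: since $H^3(\Omega)\hookrightarrow C^1(\ol{\Omega})$, the field $(1,\mathbf{w}^\varepsilon)$ is Lipschitz, its flow is defined on all of $[0,L]$, and the tangency condition makes $\Ga_w$ invariant, so characteristics issuing from $\{x_1=0\}$ stay in $\Omega$ and the support of $f^\varepsilon|_{x_1=\xi}$ remains a compact subset of $\Sigma$. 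Hence $f^\varepsilon\in C^\infty(\Omega)$ with $f^\varepsilon|_{x_1=\xi}\in C_0^\infty(\Sigma)$, which are exactly the hypotheses of Lemma \ref{proposition3-1} and Lemma \ref{lemma3-2}.

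Next I would apply those lemmas to $f^\varepsilon$. Because their constants depend only on $L$, $\Sigma$ and $\|\mathbf{w}^\varepsilon\|_{3,\Omega}$, and the latter are uniformly bounded by $C\|\mathbf{w}\|_{3,\Omega}$, one obtains the uniform bound
\begin{equation}\nonumber
\|f^\varepsilon\|_{L^\infty([0,L];H^3(\Sigma))}+\|f^\varepsilon\|_{W^{1,\infty}([0,L];H^2(\Sigma))}+\|f^\varepsilon\|_{3,\Omega}\le C\|\eta\|_{3,\Sigma},
\end{equation}
the last term following from Lemma \ref{theoremH3}. By weak and weak-$*$ compactness I extract a subsequence with $f^\varepsilon\rightharpoonup f$ in $H^3(\Omega)$, weak-$*$ in $L^\infty([0,L];H^3(\Sigma))$ and in $W^{1,\infty}([0,L];H^2(\Sigma))$; lower semicontinuity of the norms then yields the membership $f\in H^3(\Omega)\cap L^\infty([0,L];H_0^3(\Sigma))\cap W^{1,\infty}([0,L];H_0^2(\Sigma))$, where the zero-trace ($H_0$) property survives because each $f^\varepsilon$ has compact support in $\Sigma$ and the zero trace is closed under weak limits.

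I would then pass to the limit in the equation. Writing \eqref{3-9} as $\p_1 f^\varepsilon=-\mathbf{w}^\varepsilon\cdot\nabla_{x'}f^\varepsilon$, the left side converges weakly in $H^2(\Omega)$. For the right side, Rellich's theorem gives $f^\varepsilon\to f$ strongly in $H^{3-\theta}(\Omega)$ for any $\theta>0$, hence $\nabla_{x'}f^\varepsilon\to\nabla_{x'}f$ strongly in $H^{2-\theta}(\Omega)$, while $\mathbf{w}^\varepsilon\to\mathbf{w}$ strongly in $H^3(\Omega)$; the product estimate of Lemma \ref{theoremfg} then forces $\mathbf{w}^\varepsilon\cdot\nabla_{x'}f^\varepsilon\to\mathbf{w}\cdot\nabla_{x'}f$ in $L^2(\Omega)$. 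Therefore $f$ satisfies $\p_1 f+\mathbf{w}\cdot\nabla_{x'}f=0$; since both $\p_1 f$ and $\mathbf{w}\cdot\nabla_{x'}f$ lie in $L^2([0,L];H^2(\Sigma))$, this holds as an equality in $H^2(\Sigma)$ for almost every $x_1$. For the initial condition, the uniform $W^{1,\infty}([0,L];H^2(\Sigma))$ bound gives $f\in C([0,L];H^2(\Sigma))$, weakly continuous into $H^3(\Sigma)$, so traces at $x_1=0$ converge and $f|_{x_1=0}=\lim_\varepsilon\eta^\varepsilon=\eta$ in $H_0^2(\Sigma)$.

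Finally, uniqueness follows from the basic $L^2$ energy identity already used in step (1) of the proof of Lemma \ref{proposition3-1}. If $f_1,f_2$ are two solutions in the stated class, their difference $g$ solves the homogeneous equation with $g|_{x_1=0}=0$; since $g|_{x_1=\xi}\in H_0^3(\Sigma)$ the boundary term in the integration by parts vanishes, giving $\frac{d}{dx_1}\|g(x_1,\cdot)\|_{0,\Sigma}^2\le C\|\mathbf{w}(x_1,\cdot)\|_{3,\Sigma}\|g(x_1,\cdot)\|_{0,\Sigma}^2$, and Gronwall's inequality forces $g\equiv 0$. The main obstacle I anticipate is the construction of the tangency-preserving smooth approximation $\mathbf{w}^\varepsilon$ together with the clean verification that the supports stay compactly inside $\Sigma$, so that the hypotheses of the a priori lemmas genuinely apply; the passage to the limit in the transport term is the second delicate point, but it is resolved by the Rellich compactness combined with the product estimate of Lemma \ref{theoremfg}.
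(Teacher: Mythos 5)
Your proposal is correct and follows essentially the same route as the paper's proof: smooth tangency-preserving approximation of $(\mathbf{w},\eta)$, solution of the approximate problems by characteristics with compactly supported slices, the a priori bounds of Lemmas \ref{proposition3-1} and \ref{lemma3-2}, weak compactness in $H^3(\Omega)$ with preservation of the $H_0$ slice structure, and uniqueness by the $L^2$ energy estimate with Gronwall. The only notable difference is technical: where you pass to the limit in the transport term via Rellich compactness in $\Omega$ combined with the product estimate of Lemma \ref{theoremfg}, the paper instead works slice-by-slice at fixed $x_1=\xi$, using strong convergence $\mathbf{w}_i|_{x_1=\xi}\to\mathbf{w}|_{x_1=\xi}$ in $H^2(\Sigma)$ (from the embedding $H^1([0,L];H^2(\Sigma))\subset C([0,L];H^2(\Sigma))$), weak convergence of the traces $f_i|_{x_1=\xi}$ and $\p_1 f_i|_{x_1=\xi}$, and weak lower semicontinuity of the $H^2(\Sigma)$ norm; both devices are valid and of comparable difficulty.
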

\begin{proof}
     First,  $\mathbf{w} $ and $\eta$ can be approximated by a sequence of functions $\{\eta_i\}_{i=1}^{\infty}\in  C_0^{\infty}(\Sigma)$ and $\{\mathbf{w}_i\}_{i=1}^{\infty}\in  C^{\infty}(\overline{\Omega})$ satisfying $w_{2i} n_2 + w_{3i} n_3=0$ on $\Ga_w$. Thus we use the standard characteristic method to solve these transport equations. Letting $(\tau,\bar{x}_2(\tau;x),\bar{x}_3(\tau;x))$ be the solution to the following ODE system
\begin{eqnarray}\label{char}
    \begin{cases}
       \frac{d \bar{x}^i_2(\tau;x)}{d\tau} = w_{2i}(\tau,\bar{x}_2^i(\tau;x),\bar{x}_3^i(\tau;x)),\\
       \frac{d \bar{x}_3^i(\tau;x)}{d\tau} = w_{3i}(\tau,\bar{x}_2^i(\tau;x),\bar{x}_3^i(\tau;x)),\\
       \bar{x}_2(x_1;x) = x_2,\;\bar{x}_3(x_1;x) = x_3.
    \end{cases}
\end{eqnarray}
Then one has $f_{i}(x) = \eta_i(\bar{x}_2^i(0;x),\bar{x}_3^i(0;x))$ and $f_i|_{x_1=\xi}\in  C_0^{\infty}(\Sigma)$ for any $\xi\in [0,L]$.

\par The sequence $\{f_{i}\}_{i=1}^{\infty}$ is bounded in $H^3(\Omega)$, thus one can extract a subsequence which converges weakly to a function $f\in H^3(\Omega)$. We further prove that $ f\in L^{\infty}([0,L];H_0^3(\Sigma)) \cap W^{1,\infty}([0,L];H_0^2(\Sigma))$.

Thanks to the estimate \eqref{3-10}, the sequence $\{f_{i}|_{x_1=\xi}\}_{i=1}^{\infty}$ in $H_0^3(\Sigma)$ for any $\xi \in [0,L]$, from which one can extract a weakly convergent subsequence with a limit $\hat{f}_{\xi} \in H_0^3(\Sigma)$.

For any linear functional $l$ on $H^2(\Sigma)$, define a new linear functional on $H^1([0,L];H^2(\Sigma))\supset C([0,L];H^2(\Sigma)$ as follows
$$l_{\xi}(g) = l(g|_{x_1 = \xi}).$$

Then
\begin{equation*}
    l(f|_{x_1 = \xi}) = l_{\xi}(f) = \lim_{i \to \infty} l_{\xi}(f_{i}) = \lim_{i \to \infty} l(f_{i}|_{x_1 = \xi}) = l(\hat{f}_{\xi})
\end{equation*}
for all linear functionals on $H^2(\Sigma)$ and hence $f|_{x_1 =\xi} = \hat{f}_{x_1}$ in $H^2(\Sigma)$. However, since $\hat{f}_{x_1} \in H_0^3(\Sigma)$ the equality also holds in $H_0^3(\Sigma)$. It follows that $f|_{x_1 = \xi} \in H_0^3(\Sigma)$ and hence $f \in  L^{\infty}([0,L];H_0^3(\Sigma)) $. By the same argument, one may get $ \p_1 f|_{x_1 = \xi} \in H_0^2(\Sigma)$, which together with $f|_{x_1 =\xi} \in H_0^3(\Sigma)$ gives $f \in W^{1,\infty}([0,L];H_0^2(\Sigma)$.

\par It remains to show that $f $ is a unique solution to \eqref{3-9}. It follows from the embedding $H^1([0,L]; H^2(\Sigma))\subset C([0,L];H^2(\Sigma))$ that
\begin{equation*}
    \mathbf{w}_{i}|_{x_1 = \xi} \rightarrow \mathbf{w}|_{x_1 = \xi} \quad\text{in} \quad H^2(\Sigma),
\end{equation*}
as $i\rightarrow \infty$. This, together with the fact
\begin{equation*}
    f_{i}|_{x_1 = \xi} \rightharpoonup f|_{x_1 = \xi} \quad\text{in} \quad H_0^3(\Sigma), \ \ i\to \infty
\end{equation*}
\begin{equation*}
   \p_1 f_{i}|_{x_1 = \xi} \rightharpoonup \p_1 f|_{x_1 = \xi} \quad\text{in} \quad H_0^2(\Sigma), \ \ i\to \infty
\end{equation*}
implies that
\begin{equation*}
    \left(\p_1 f_{i} + (\mathbf{w}_{i}\cdot \nabla) f_{i}\right)|_{x_1 = \xi} \rightharpoonup \left(\p_1 f + (\mathbf{w}\cdot \nabla) f\right)|_{x_1 = \xi} \quad \text{in} \quad H_0^2(\Sigma),
\end{equation*}
as $i \rightarrow \infty$. Note that for any $ i $,
\begin{equation*}
     \quad \|\p_1 f_{i} + (\mathbf{w}_{i}\cdot \nabla) f_{i}\|_{2,\Sigma} = 0.
\end{equation*}
Thus
\begin{equation*}
    \|\p_1 f + (\mathbf{w}\cdot \nabla) f\|_{2,\Sigma} \leq \lim\inf_{i \to \infty}\|\p_1 f_{i} + (\mathbf{w}_{i}\cdot \nabla) f_{i}\|_{2,\Sigma}=0.
\end{equation*}
Hence the first equation in \eqref{3-9} holds for $f$ as an equality in $H_0^2(\Sigma)$ for almost every $x_1 \in [0,L]$. That $f|_{x_1 = 0} = \eta$ follows immediately from
\begin{equation*}
    f_{i}|_{x_1 = 0} \rightharpoonup f|_{x_1 = 0} \quad \text{in} \quad H_0^3(\Sigma),
\end{equation*}
and
\begin{equation*}
    f_{i}|_{x_1 = 0} = \eta_i \to \eta \quad \text{in} \quad H_0^3(\Sigma).
\end{equation*}
\par The uniqueness of the solutions to \eqref{3-9} follows from a simple energy estimate.
\end{proof}
\par It follows from Lemmas \rm{3.1}, \rm{3.2}, \rm{3.3}  that \eqref{3-7} and \eqref{3-8} has a unique solution $ (V_4,V_5)\in H^3(\Omega) $ satisfying
\begin{equation}\label{3-a}
\|(V_4,V_5)\|_{3,\Omega}\leq C(L,\Sigma,\|(B_{0},K_{0})\|_{3,\Sigma})\sigma.\\[2ex]
\end{equation}

\par{\bf Step 2. Resolving the vorticity field.}
\par In this step, we  first solve the following linearized transport equation for \(\omega_1\).
\begin{align}\label{3-20}
\begin{cases}
    \p_1\omega_1 + \sum_{i=2}^{3}\frac{\bar{V}_i}{\bar u + \bar{V}_1} \p_i\om_1+ \mu(\bar{\mathbf{V}})\om_1 = R(\bar{\mathbf{V}},V_4,V_5),\quad in \quad \Omega,\\
    \om_1(0,x^{\prime}) = \sigma J_{0}(x^{\prime}).\quad on \quad x_1 = 0,
\end{cases}
\end{align}
where
\begin{align*}
    & \mu(\bar{\mathbf{V}})= \p_2\left(\frac{\bar{V}_2}{\bar u + \bar{V}_1}\right) + \p_3\left(\frac{\bar{V}_3}{\bar u + \bar{V}_1}\right),\\
    & R(\bar{\mathbf{V}},V_4,V_5) = -\p_2\left(\frac{1}{\bar u + \bar{V}_1}\right)\p_3V_4+\p_3\left(\frac{1}{\bar u + \bar{V}_1}\right)\p_2 V_4 \\
    &\qquad- \frac{1}{\gamma-1}\p_3\left(\frac{(H(\bar{\mathbf{V}}))^{\gamma-1}}{\bar u +\bar{V}_1 }\right)\p_2 V_5+\frac{1}{\gamma-1}\p_2\left(\frac{(H(\bar{\mathbf{V}}))^{\gamma-1}}{\bar u + \bar{V}_1}\right)\p_3 V_5,\\
    & (H(\bar{\mathbf{V}}))^{\gamma-1} = \left( \frac{\gamma-1}{\gamma(\bar K + \bar{V}_5)} \right)\left( B_0 + \bar{V}_4 -\frac{1}{2}((\bar u + \bar{V}_1)^2 + \bar{V}_2^2 + \bar{V}_3^2) \right),
\end{align*}
and from Step 1, we clearly know that $V_4$ and $V_5 \in L^{\infty}([0,L];H_0^3(\Sigma)) \cap H^3(\Omega)$, which implies $ R \in L^2([0,L];H_0^2(\Sigma))$.
\par To solve \eqref{3-20}, we consider  the following transport equation:
\begin{align}\label{3-21}
    \begin{cases}
        \p_1 \omega_1 + (w_2,w_3) \cdot \nabla_{x^{\prime}} \omega_1  +\mu(\bar{\mathbf{V}})\omega_1 = R(\bar{\mathbf{V}},V_4,V_5), \\
        \omega_1|_{x_1 = 0} =  \sigma J_{0}(x^{\prime}).
    \end{cases}
\end{align}
where $ w_i = \frac{\bar{V}_i}{\bar u + \bar{V}_1}$, $i =2,3$.
\par Similar to Lemmas \rm{3.1} and \rm{3.2}, we have the following conclusion.
\begin{lemma}\label{lemma3-4}
Assume that ${\bf w}=(w_2,w_3) \in C^{\infty}(\ol{\Omega})$ satisfying $w_2n_2+w_3 n_3=0$ on $\Ga_w$, $ \mu \in C^{\infty}(\ol{\Omega})$ and $\omega_1\in C^{\infty}(\Omega)  $ with $\omega_1|_{x_1 = \xi} \in C_0^{\infty}(\Sigma)$  for all $\xi \in [0,L]$, then the solution $ \omega_1 $ of \eqref{3-21}   satisfies
\begin{equation}\label{3-22}
       \|\omega_1\|_{L^{\infty}([0,L];H^2(\Sigma))}+ \|\omega_1\|_{L^2([0,L];H^2(\Sigma))} \leq C( \sigma\|J_{0}\|_{2,\Sigma} +  \|R\|_{2,\Omega}),
\end{equation}
and
\begin{equation}\label{3-233}
    \|\omega_1\|_{W^{1,\infty}([0,L];H^1(\Sigma))}+\|\omega_1\|_{H^1([0,L];H^1(\Sigma))} \leq C (\sigma\|  J_{0}\|_{2,\Sigma}+ \|R\|_{2,\Omega}),
\end{equation}
and
\begin{equation}\label{3-23}
    \|\omega_1\|_{H^2([0,L];L^2(\Sigma))} \leq C (\sigma \| J_{0}\|_{2,\Sigma}+\|R\|_{2,\Omega}),
\end{equation}

where $C=C(L,\Sigma,\|\mathbf{w}\|_{3,\Omega},\|\mu\|_{2,\Omega})$.
\end{lemma}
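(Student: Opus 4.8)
The plan is to follow the energy method developed in the proofs of Lemmas \ref{proposition3-1} and \ref{lemma3-2}, now carrying along the zeroth-order term $\mu(\bar{\mathbf V})\omega_1$ and the inhomogeneity $R(\bar{\mathbf V},V_4,V_5)$. First I would establish \eqref{3-22} by deriving a differential inequality for $\|\omega_1(x_1,\cdot)\|_{2,\Sigma}^2$. Multiplying \eqref{3-21} by $\omega_1$ and integrating over $\Sigma$, the transport term integrates by parts to $\frac12\int_\Sigma \omega_1^2\,\nabla_{x'}\cdot\mathbf{w}\,dx'$ (the boundary term vanishing because $\omega_1|_{x_1=\xi}\in C_0^\infty(\Sigma)$), the term $\int_\Sigma \mu\omega_1^2\,dx'$ is controlled by $\|\mu\|_{C_b(\Sigma)}\|\omega_1\|_{0,\Sigma}^2$, and the source is split by Young's inequality as $\int_\Sigma \omega_1 R\,dx'\le \frac12\|\omega_1\|_{0,\Sigma}^2+\frac12\|R\|_{0,\Sigma}^2$. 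Differentiating \eqref{3-21} once and twice in the tangential variables $x_i$ ($i=2,3$) and repeating this computation produces, after collecting the commutator terms in $\mathbf w$ and $\mu$ and again applying Young's inequality to the terms carrying derivatives of $R$, the estimate
\begin{equation}\nonumber
\frac{d}{dx_1}\|\omega_1(x_1,\cdot)\|_{2,\Sigma}^2\le C(\Sigma)\big(\|\mathbf w(x_1,\cdot)\|_{3,\Sigma}+\|\mu(x_1,\cdot)\|_{2,\Sigma}\big)\|\omega_1(x_1,\cdot)\|_{2,\Sigma}^2+\|R(x_1,\cdot)\|_{2,\Sigma}^2.
\end{equation}
Gronwall's inequality then yields $\sup_{x_1\in[0,L]}\|\omega_1\|_{2,\Sigma}^2\le C\big(\sigma^2\|J_0\|_{2,\Sigma}^2+\int_0^L\|R\|_{2,\Sigma}^2\,dx_1\big)$, and since $\int_0^L\|R\|_{2,\Sigma}^2\,dx_1=\|R\|_{L^2([0,L];H^2(\Sigma))}^2\le\|R\|_{2,\Omega}^2$ by Lemma \ref{theoremH3}, this gives both the $L^\infty([0,L];H^2(\Sigma))$ bound and, upon integrating in $x_1$ over the finite interval, the $L^2([0,L];H^2(\Sigma))$ bound in \eqref{3-22}.

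For the $x_1$-derivative estimates \eqref{3-233} and \eqref{3-23} I would trade normal derivatives for tangential ones using the equation itself, exactly as in Lemma \ref{lemma3-2}. Writing $\p_1\omega_1=-\mathbf w\cdot\nabla_{x'}\omega_1-\mu\omega_1+R$ and estimating the products with Lemma \ref{theoremfg} gives
\begin{equation}\nonumber
\|\p_1\omega_1(x_1,\cdot)\|_{1,\Sigma}\le C(\Sigma)\big(\|\mathbf w\|_{2,\Sigma}\|\omega_1\|_{2,\Sigma}+\|\mu\|_{2,\Sigma}\|\omega_1\|_{1,\Sigma}+\|R\|_{1,\Sigma}\big),
\end{equation}
so that \eqref{3-22} together with $\|R\|_{C([0,L];H^1(\Sigma))}\le C\|R\|_{H^1([0,L];H^1(\Sigma))}\le C\|R\|_{2,\Omega}$ (the embedding $H^1([0,L];H^1(\Sigma))\hookrightarrow C([0,L];H^1(\Sigma))$ combined with Lemma \ref{theoremH3}) controls $\|\p_1\omega_1\|_{L^\infty([0,L];H^1(\Sigma))}$; integrating the squared inequality in $x_1$ handles the $H^1([0,L];H^1(\Sigma))$ part, which proves \eqref{3-233}. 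For \eqref{3-23} I would differentiate \eqref{3-21} once in $x_1$ to obtain $\p_1^2\omega_1=-\mathbf w\cdot\nabla_{x'}(\p_1\omega_1)-\p_1\mathbf w\cdot\nabla_{x'}\omega_1-\mu\p_1\omega_1-\p_1\mu\,\omega_1+\p_1 R$, estimate the right-hand side in $L^2(\Sigma)$, and integrate in $x_1$; the only genuinely new contribution is $\int_0^L\|\p_1 R\|_{0,\Sigma}^2\,dx_1\le\|R\|_{H^1([0,L];L^2(\Sigma))}^2\le\|R\|_{2,\Omega}^2$, again furnished by Lemma \ref{theoremH3}, while all $\omega_1$-terms are already bounded by \eqref{3-22} and \eqref{3-233}.

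I expect the routine but delicate part to be the consistent bookkeeping of the inhomogeneity: because $R$ is controlled only in $H^2(\Omega)$, each of the three estimates requires a different mixed-norm bound on it — namely $\|R\|_{L^2([0,L];H^2(\Sigma))}$, $\|R\|_{C([0,L];H^1(\Sigma))}$ and $\|R\|_{H^1([0,L];L^2(\Sigma))}$ — and all three must be traced back to $\|R\|_{2,\Omega}$ through the anisotropic characterization of $H^2(\Omega)$ in Lemma \ref{theoremH3} and the one-dimensional Sobolev embedding. The zeroth-order term $\mu\omega_1$ is strictly lower order and contributes only to the Gronwall constant $C(L,\Sigma,\|\mathbf w\|_{3,\Omega},\|\mu\|_{2,\Omega})$, and the compact support $\omega_1|_{x_1=\xi}\in C_0^\infty(\Sigma)$ is precisely what makes every integration by parts free of boundary terms, exactly as in Lemma \ref{proposition3-1}.
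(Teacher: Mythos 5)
Your proposal is correct and follows essentially the same route as the paper's own proof: the identical tangential energy/Gronwall argument for \eqref{3-22} (with boundary terms vanishing by the compact support of $\omega_1|_{x_1=\xi}$), the same trick of solving the equation for $\p_1\omega_1$ and invoking $H^2(\Omega)\subset H^1([0,L];H^1(\Sigma))\hookrightarrow C([0,L];H^1(\Sigma))$ to get \eqref{3-233}, and the same differentiation in $x_1$ plus integration for \eqref{3-23}. The only cosmetic difference is that the paper's differential inequality carries an extra additive constant from the Young splitting of the source term (its coefficient reads $\|\mathbf{w}\|_{3,\Sigma}+\|\mu\|_{2,\Sigma}+1$), which your Gronwall step absorbs anyway.
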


\begin{proof}
\begin{enumerate}
\item  To prove \eqref{3-22}, we first show the following inequality:
\begin{equation}\label{3-24}
    \frac{d}{dx_1}\|\omega_1\|_{2,\Sigma}^2 \leq  C ( \Sigma)(\|\mathbf{w}(x_1,\cdot)\|_{3,\Sigma} + \|\mu\|_{2,\Sigma} +1 )\|\omega_1(x_1,\cdot)\|_{0,\Sigma}^2 + \|R\|_{2,\Sigma}^2.
\end{equation}
Applying the Gronwall's inequality, one has
\begin{align*}
    \sup_{x_1 \in [0,L]}\|\omega_1\|_{2,\Sigma}^2
    &\leq \exp{\left(\int_0^L C(\Sigma)(\|\mathbf{w}\|_{3,\Sigma}+\|\mu\|_{2,\Sigma}+1) dx_1\right)}\left[
\sigma\|  J_{0}\|_{2,\Sigma}^2 + \int_0^L \|R\|_{2,\Sigma}^2  dx_1\right]\\
    & \leq C\left(L,\Sigma,\| \mathbf{w}\|_{3,\Omega},\|\mu\|_{2,\Omega}\right)\left(\sigma\|J_{0}\|_{2,\Sigma}^2 + \|R\|_{2,\Om}^2\right).
\end{align*}
Thus
\begin{align*}
    \|\omega_1\|_{L^2([0,L];H^2(\Sigma))}^2= \int_0^L\|\omega_1\|_{2,\Sigma}^2d x_1 \leq C\left(L,\Sigma,\| \mathbf{w}\|_{3,\Omega},\|\mu\|_{2,\Omega}\right) \left( \sigma\|J_{0}\|_{2,\Sigma}^2 + \|R\|_{2,\Om}^2\right).
\end{align*}
\par Next, we prove \eqref{3-24}. Note that
\begin{align*}
    \frac{d}{dx_1}\|\omega_1(x_1,\cdot)\|_{0,\Sigma}^2 &=2\int_{\Sigma}R\omega_1 d x^{\prime}+\int_{\Sigma}\omega_1^2 \nabla_{x^{'}}\cdot\mathbf{w} d x^{\prime} -2\int_{\Sigma}\mu \omega_1^2d x^{\prime}\\
   % & \leq \int_{\Sigma}\omega_1^2 |\nabla_{x^{\prime}}\cdot\mathbf{w} |d x^{\prime} +2\int_{\Sigma}|R\omega_1| d x^{\prime}+2\int_{\Sigma}\mu \omega_1^2d x^{\prime}\\[1.5ex]
    & \leq \left(\| \nabla_{x^{\prime}} \cdot \mathbf{w} \|_{C_b(\Sigma)} + 2\|\mu\|_{C_b(\Sigma)} +1\right)\|\omega_1\|_{0,\Sigma}^2 + \|R\|_{0,\Sigma}^2.
\end{align*}
Differentiating the equation \eqref{3-21} with respect to \(x_i,i = 2,3\) gives:
    \begin{align*}
    \p_1\p_i \omega_1+\mathbf{w} \cdot \nabla_{x^{'}}\left(\p_i \omega_1\right) + \p_i \mathbf{w}\cdot\nabla_{x^{'}} \omega_1 + \omega_1\p_i \mu + \mu \p_i \omega_1 = \p_i R, \quad i = 2,3.
   \end{align*}
A direct computation yields that
   \begin{align*}
      &\frac{d}{d x_1}\|\nabla_{x^\prime} \omega_1(x_1,\cdot)\|_{0,\Sigma}^2\\[2ex]
&\leq C(\Sigma)\|\nabla_{x^{'}}\cdot \mathbf{w}\|_{C_b(\Sigma)}\|\nabla_{x^\prime} \omega_1\|_{0,\Sigma}^2 + \|\nabla_{x^\prime} \mathbf{w}\|_{C_b(\Sigma)}\|\nabla_{x^{\prime}}\omega_1\|_{0,\Sigma}^2+C(\Sigma)\|\nabla_{x^\prime} R\|_{0,\Sigma}^2 \\[2ex] \nonumber
    & +C(\Sigma)\|\nabla_{x^\prime} \omega_1\|_{0,\Sigma}^2+
    C(\Sigma)\|\nabla_{x^\prime} \mu\|_{1,\Sigma}\|\nabla_{x^\prime} \omega_1\|_{0,\Sigma}\|\omega_1\|_{1,\Sigma} +C(\Sigma)\|\mu\|_{C_b(\Sigma)}\|\nabla_{x^\prime} \omega_1\|_{0,\Sigma}^2 \\[2ex] \nonumber
    & \leq C ( \Sigma)(\|\mathbf{w}(x_1,\cdot)\|_{3,\Sigma} + \|\mu\|_{2,\Sigma} +1 )\|\omega_1(x_1,\cdot)\|_{1,\Sigma}^2 + C(\Sigma)\|R\|_{1,\Sigma}^2 .
\end{align*}
Differentiating the equation \eqref{3-21}  with respect to \(x_i, x_j\) for $i,j=2,3$ twice yields
    \begin{eqnarray}\no
    &&\p_1(\p_{ij}^2 \omega_1)+\mathbf{w} \cdot \nabla_{x^\prime}\left(\p_{ij}^2 \omega_1\right) +\p_i \mathbf{w} \cdot \nabla_{x^\prime} \p_j \omega_1+\p_j \mathbf{w} \cdot \nabla_{x^\prime} \p_i \omega_1+ \p_{ij}^2 \mathbf{w}\cdot\nabla_{x^\prime} \omega_1\\\no
    &&\quad\quad +\p_{ij}^2 \mu\omega_1 +  \p_i\mu\p_j \omega_1 + \p_j \mu\p_i \omega_1+ \mu \p_{ij}^2 \omega_1 = \p_{ij}^2 R.
   \end{eqnarray}
Then one derives
     \begin{align*}
       \frac{d}{dx_1}\|\nabla_{x^\prime}^2 \omega_1\|_{0,\Sigma}^2 &\leq \left(\|\nabla_{x^{'}}\cdot \mathbf{w}\|_{C_b(\Sigma)} + 2\|\mu\|_{C_b(\Sigma)}\right)\|\nabla_{x^\prime}^2 \omega_1\|_{0,\Sigma}^2\\
       & + C(\Sigma)(\|\nabla_{x^\prime} \mathbf{w}\|_{C_b(\Sigma)}\|\nabla_{x^\prime}^2 \omega_1\|_{0,\Sigma}^2 + \|\nabla_{x^\prime}^2 \mathbf{w}\|_{1,\Sigma} \|\nabla_{x^{\prime}}\omega_1\|_{1,\Sigma}\|\nabla_{x^\prime}^2 \omega_1\|_{0,\Sigma}) \\[2ex]
        & + C(\Sigma)(\|\nabla_{x^\prime} \mu\|_{1,\Sigma}\|\nabla_{x^\prime} \omega_1\|_{1,\Sigma}\|\nabla_{x^\prime}^2 \omega_1\|_{0,\Sigma}+ \|\nabla_{x^\prime}^2 \mu\|_{0,\Sigma}\|\nabla_{x^\prime}^2 \omega_1\|_{0,\Sigma}\|\omega_1\|_{C_b(\Sigma)})\\
       & + C(\Sigma)(\int_{\Sigma} |\nabla_{x^\prime}^2 R|^2 d x' +\int_{\Sigma} |\nabla_{x'}^2 \omega_1|^2 d x')\\
       & \leq  C ( \Sigma)(\|\mathbf{w}(x_1,\cdot)\|_{3,\Sigma} + \|\mu\|_{2,\Sigma} +1 )\|\omega_1\|_{2,\Sigma}^2 + C(\Sigma)\|R\|_{2,\Sigma}^2.
   \end{align*}
Therefore, we completed the proof of inequality \eqref{3-24}.
\item  Next, to prove \eqref{3-233}, by using the equation \eqref{3-21} we get the estimate
\begin{equation*}
    \begin{aligned}
        \|\p_1 \omega_1\|_{1,\Sigma} &\leq \|R\|_{1,\Sigma} + \|\mathbf{w}\cdot\nabla_{x^{\prime}}\omega_1\|_{1,\Sigma} +\|\mu \omega_1\|_{1,\Sigma}\\[2ex]
        & \leq \|R\|_{1,\Sigma} + C(\|\mathbf{w}\|_{2,\Sigma} + \|\mu \|_{1,\Sigma})\|\omega_1\|_{2,\Sigma}
    \end{aligned}
\end{equation*}
This implies
\begin{equation*}
    \sup_{x_1 \in [0,L]} \|\p_1 \omega_1\|_{1,\Sigma} \leq   C(\|\mathbf{w}\|_{3,\Om} + \|\mu \|_{2,\Omega}+1)(\sigma\|  J_{0}\|_{2,\Sigma} +\|R\|_{2,\Om}),
\end{equation*}
by using $H^n(\Om) \subset H^1([0,L];H^{n-1}(\Sigma))$ and Lemma \rm{2.2}. This allows us to estimate
\begin{eqnarray}\no
    &&\int_0^L \|\p_1 \omega_1\|_{1,\Sigma}^2 dx_1 \leq C(\sigma\|  J_{0}\|_{2,\Sigma}^2 +\|R\|_{2,\Om}^2),\\\no
    &&\|\omega_1\|_{H^1([0,L];H^1(\Sigma))}^2= \int_0^L \|\omega_1\|_{1,\Sigma}^2+\|\p_1 \omega_1\|_{1,\Sigma}^2 dx_1 \\\no
    &&\leq C(L,\Sigma,\|\mathbf{w}\|_{3,\Omega},\|\mu\|_{2,\Omega})\sigma\|  J_{0}\|_{2,\Sigma}^2 + C\|R\|_{2,\Om}^2,
\end{eqnarray}
by using equation \eqref{3-22}. The inequality \eqref{3-233} is proved.

  \item  Next, we prove \eqref{3-23}. Differentiating the equation \eqref{3-21} with respect to \( x_1\) gives
  \begin{align*}
    \p_1^2 \omega_1+\mathbf{w}\cdot \nabla_{x^{'}}\left(\p_1 \omega_1\right) + \p_1 \mathbf{w}\cdot\nabla_{x^{'}} \omega_1 + \omega_1\p_1 \mu + \mu \p_1 \omega_1 = \p_1 R,
  \end{align*}
   which yields that
   \begin{align*}
      %&\| \p_1^2 \omega_1\|_{0,\Sigma} %\leq \| \p_1\mathbf{w}\cdot \nabla_{x^{'}}\omega_1\|_{0,\Sigma} + \| \mathbf{w} \cdot \nabla_{x^{'}}\p_1 \omega_1  \|_{0,\Sigma} +  \|\omega_1\p_1 \mu \|_{0,\Sigma}+\|\mu \p_1 \omega_1  \|_{0,\Sigma} + \|\p_1 R \|_{0,\Sigma}\\[2ex]
      &\| \p_1^2 \omega_1\|_{0,\Sigma}\leq C(\Sigma)\bigg(\|\p_1 \mathbf{w}\|_{1,\Sigma}\|\nabla_{x^{\prime}}\omega_1\|_{1,\Sigma} + \|\mathbf{w}\|_{2,\Sigma}\|\nabla_{x^{\prime}} \p_1 \omega_1\|_{0,\Sigma}
       \\
       &\quad\quad+ \| \omega_1\|_{2,\Sigma}\| \p_1 \mu \|_{0,\Sigma} + \| \mu \|_{1,\Sigma}\| \p_1 \omega_1\|_{1,\Sigma} + \| \p_1 R\|_{0,\Sigma}\bigg)\\[2ex]
      & \leq C(\Sigma)\left(\| \p_1 \mathbf{w}\|_{1,\Sigma} + \| \p_1 \mu \|_{1,\Sigma}+\| \mathbf{w} \|_{2,\Sigma} +\| \mu \|_{1,\Sigma}\right)(\|\omega_1\|_{2,\Sigma}+\|\p_1 \omega_1\|_{1,\Sigma}) + C(\Sigma)\|\p_1 R\|_{0,\Sigma}.
   \end{align*}
  From this we get
    \begin{align*}
        \int_0^L \| \p_1^2 \omega_1\|^2_{0,\Sigma} dx_1 & \leq C\int_0^L \left(\sigma\|  J_{0}\|_{2,\Sigma}+  \|R\|_{1,\Sigma}\right)^2 d x_1+C\int_0^L\|\p_1R\|_{0,\Si}^2 dx_1\\
        &\leq C(\sigma\| J_{0} \|^2_{2,\Sigma} +\| R \|^2_{1,\Omega})
    \end{align*}
     where $C= C(L,\Sigma,\| \mathbf{w}\|_{3,\Omega},\|\mu\|_{2,\Omega})$.
  Then
  \begin{equation*}
  \begin{aligned}
            \|\omega_1\|^2_{H^2([0,L];L^2(\Sigma))}&= \int_0^L \|\omega_1\|^2_{0,\Sigma} + \|\p_1 \omega_1\|^2_{0,\Sigma} +\|\p_1^2 \omega_1\|^2_{0,\Sigma} dx_1\\
        & \leq C(L,\Sigma,\|\mathbf{w}\|_{3,\Omega},\|\mu\|_{2,\Omega})(\sigma\|  J_{0}\|_{2,\Sigma}^2 + \|R\|_{2,\Om}^2),
  \end{aligned}
  \end{equation*}
  by using \eqref{3-22} and \eqref{3-233}. Thus the proof of Lemma \ref{lemma3-4} is completed.

\end{enumerate}
\end{proof}

\par Next, we establish the existence and uniqueness of the solution to the equation \eqref{3-21}.

\begin{proposition}\label{proposition35}
 For $\mathbf{w}=(w_2,w_3)\in H^3(\Omega)$ satisfying $w_2n_2+w_3 n_3=0$ on $\Ga_w$, $J_{0} \in H^2_0(\Sigma)$ and $ R \in L^2([0,L];H_0^2(\Sigma))$, the equation \eqref{3-21} has a unique solution $ \omega_1 \in H^2(\Omega)\cap L^{\infty}([0,L];H_0^2(\Sigma)) \cap W^{1,\infty}([0,L];H^1_0(\Sigma))$. Moreover, for almost every $x_1 \in [0,L]$, the first equation in \eqref{3-21} holds as an equality in $H_0^1(\Sigma)$ and $\omega_1|_{x_1=0} =  \sigma J_{0}$ in $H_0^2(\Sigma)$. Furthermore, there holds
\begin{equation*}\label{351}
    \p_1 \omega_1|_{x_1 = 0,L}\in H_0^1(\Sigma) \subset H^{1/2}_{00}(\Sigma).
\end{equation*}

\end{proposition}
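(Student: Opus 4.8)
The plan is to mirror the argument of Proposition \ref{lemma3-5}, replacing the homogeneous transport equation \eqref{3-9} by the inhomogeneous equation \eqref{3-21} and using the a priori bounds \eqref{3-22}, \eqref{3-233}, \eqref{3-23} of Lemma \ref{lemma3-4} in place of Lemmas \ref{proposition3-1} and \ref{lemma3-2}. First I would regularize all the data: approximate $\mathbf{w}$ by $\mathbf{w}_i\in C^\infty(\ol{\Om})$ with $w_{2i}n_2+w_{3i}n_3=0$ on $\Ga_w$, the coefficient $\mu(\bar{\mathbf{V}})$ by $\mu_i\in C^\infty(\ol{\Om})$, the datum $J_{0}$ by $J_{0,i}\in C_0^\infty(\Sigma)$ converging in $H_0^2(\Sigma)$, and the source $R(\bar{\mathbf{V}},V_4,V_5)$ by $R_i\in C^\infty(\Om)$ with $R_i|_{x_1=\xi}\in C_0^\infty(\Sigma)$ for every $\xi$ and $R_i\to R$ in $L^2([0,L];H_0^2(\Sigma))$. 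Regularizing $R$, which is only square-integrable in $x_1$, while keeping each cross-sectional slice compactly supported in $\Sigma$, is the one genuinely new point beyond Proposition \ref{lemma3-5}, and is exactly what will later force the zero boundary trace of $\p_1\omega_1$.

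For the smooth data I would solve \eqref{3-21} along the characteristics \eqref{char} of $\mathbf{w}_i$ together with an integrating factor for the zeroth-order term: along a characteristic $\tau\mapsto X_i(\tau)$ issuing from $(0,y')$,
\begin{equation}\nonumber
\omega_{1,i}(X_i(\tau)) = e^{-M_i(\tau)}\sigma J_{0,i}(y') + \int_0^\tau e^{M_i(s)-M_i(\tau)} R_i(X_i(s))\,ds, \quad M_i(\tau)=\int_0^\tau \mu_i(X_i(s))\,ds.
\end{equation}
Since $\mathbf{w}_i$ is tangent to $\Ga_w$, its flow is a diffeomorphism of $\ol{\Sigma}$ fixing $\p\Sigma$, so the image of a set compactly contained in $\Sigma$ stays compactly contained in $\Sigma$ over the finite interval $[0,L]$; hence both the initial contribution and the source contribution at each slice remain compactly supported, giving $\omega_{1,i}|_{x_1=\xi}\in C_0^\infty(\Sigma)$ for every $\xi$. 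Lemma \ref{lemma3-4} then bounds $\{\omega_{1,i}\}$ uniformly in $H^2(\Om)\cap L^\infty([0,L];H_0^2(\Sigma))\cap W^{1,\infty}([0,L];H_0^1(\Sigma))$ (recalling the norm splitting of Lemma \ref{theoremH3}), so a subsequence converges weakly, and weakly-$*$, to a limit $\omega_1$ in these spaces.

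Next I would identify the traces and the limiting equation as in Proposition \ref{lemma3-5}. For each fixed $\xi$ the slices $\omega_{1,i}|_{x_1=\xi}$ and $\p_1\omega_{1,i}|_{x_1=\xi}$ are uniformly bounded in $H_0^2(\Sigma)$ and $H_0^1(\Sigma)$; testing against functionals $l_\xi(g)=l(g|_{x_1=\xi})$ shows $\omega_1|_{x_1=\xi}\in H_0^2(\Sigma)$ and $\p_1\omega_1|_{x_1=\xi}\in H_0^1(\Sigma)$. Using $\mathbf{w}_i|_{x_1=\xi}\to\mathbf{w}|_{x_1=\xi}$ and $\mu_i|_{x_1=\xi}\to\mu|_{x_1=\xi}$ in $H^2(\Sigma)$ (via $H^1([0,L];H^2(\Sigma))\subset C([0,L];H^2(\Sigma))$) and the product estimate of Lemma \ref{theoremfg}, one passes to the weak limit of $\p_1\omega_{1,i}+\mathbf{w}_i\cdot\nabla_{x'}\omega_{1,i}+\mu_i\omega_{1,i}-R_i$ in $H_0^1(\Sigma)$; since each approximant satisfies the equation exactly, lower semicontinuity gives $\|\p_1\omega_1+\mathbf{w}\cdot\nabla_{x'}\omega_1+\mu\omega_1-R\|_{1,\Sigma}=0$ for a.e. $x_1$. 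The initial condition follows from $\omega_{1,i}|_{x_1=0}=\sigma J_{0,i}\rightharpoonup\omega_1|_{x_1=0}$ and $\sigma J_{0,i}\to\sigma J_0$ in $H_0^2(\Sigma)$. Uniqueness is a plain $L^2(\Sigma)$ energy estimate: the difference of two solutions solves the homogeneous equation, the boundary integral over $\p\Sigma$ vanishes because the slices lie in $H_0^1(\Sigma)$ and $\mathbf{w}$ is tangent to $\Ga_w$, and Gronwall in $x_1$ with zero initial datum forces it to vanish.

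Finally, the assertion $\p_1\omega_1|_{x_1=0,L}\in H_0^1(\Sigma)$ is contained in the trace identification above evaluated at $\xi=0$ and $\xi=L$: the uniform $H_0^1(\Sigma)$ bound on $\p_1\omega_{1,i}$ at each endpoint passes to the weak limit inside the closed subspace $H_0^1(\Sigma)$, so the limiting endpoint traces vanish on $\p\Sigma$. The inclusion $H_0^1(\Sigma)\subset H_{00}^{1/2}(\Sigma)$ is then the standard embedding underlying the definition of $H_{00}^{1/2}(\Sigma)$, since a function vanishing on $\p\Sigma$ admits an extension by zero lying in $H^{1/2}(\mbR^2)$. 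I expect the main obstacle to be the second paragraph, namely propagating compact support in $\Sigma$ under the flow in the presence of the source $R$, as this is precisely what guarantees the homogeneous boundary behavior used throughout the trace arguments.
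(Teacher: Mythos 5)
Your proposal is correct and follows the paper's proof almost verbatim up to the last claim: the paper likewise approximates $\mathbf{w}$, $J_0$, $R$ by smooth data with compactly supported slices, solves along the characteristics \eqref{char} with the same integrating-factor Duhamel formula, gets uniform bounds from Lemma \ref{lemma3-4}, extracts a weak $H^2(\Omega)$ limit, and then simply says the trace identification, limiting equation and uniqueness go ``as in Proposition \ref{lemma3-5}'' --- which is exactly what you spell out. The one point where you genuinely diverge is the endpoint assertion $\p_1\omega_1|_{x_1=0,L}\in H_0^1(\Sigma)$: the paper does not re-run the slice-functional argument there, but instead invokes Lions--Magenes \cite{JE72} to get $\omega_1\in C([0,L];H^{3/2}(\Sigma))$, $\p_1\omega_1\in C([0,L];H^{1/2}(\Sigma))$, and then upgrades to weak continuity $\omega_1\in C_w([0,L];H_0^2(\Sigma))$, $\p_1\omega_1\in C_w([0,L];H_0^1(\Sigma))$, so that the endpoint values are defined everywhere in $x_1$ and automatically lie in the closed subspaces. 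Your alternative --- uniform $H_0^1(\Sigma)$ bounds on $\p_1\omega_{1,i}|_{x_1=0,L}$ (valid since $\p_1\omega_{1,i}=-\mathbf{w}_i\cdot\nabla_{x'}\omega_{1,i}-\mu_i\omega_{1,i}+R_i$ has compactly supported slices) passing to the weak limit --- also works and is more self-contained, but you should say explicitly why the endpoint trace of $\p_1\omega_1$ is well-defined before you identify it: since $\omega_1\in H^2(\Omega)$, one has $\p_1\omega_1\in H^1(\Omega)\subset H^1([0,L];L^2(\Sigma))\hookrightarrow C([0,L];L^2(\Sigma))$, so the functional $l_\xi$ at $\xi=0,L$ is continuous and the weak $H_0^1(\Sigma)$ limit of the endpoint slices coincides with $\p_1\omega_1|_{x_1=0,L}$; what you lose relative to the paper is the weak continuity of $\p_1\omega_1$ in $x_1$ on all of $[0,L]$, which is slightly stronger than the endpoint statement but not needed later. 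Two further cosmetic remarks: your claim that the flow ``fixes $\p\Sigma$'' should read ``preserves $\p\Sigma$'' (tangency gives invariance of the boundary, not pointwise fixing), which is all that is needed for propagation of compact support; and in the liminf step the slice convergence $R_i|_{x_1=\xi}\to R|_{x_1=\xi}$ only holds for a.e.\ $\xi$ along a subsequence, which is consistent with, and sufficient for, the a.e.\ formulation of the limiting equation.
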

\begin{proof}

Same as the proof of Proposition \ref{lemma3-5},  $\mathbf{w} $, $J_0$ and $R$ can be approximated by a sequence of functions $\{\mathbf{w}_i\}_{i=1}^{\infty}\in  C^{\infty}(\overline{\Omega})$, $\{J_0^i\}_{i=1}^{\infty}\in  C_0^{\infty}(\Sigma)$ and $R_i\in C^{\infty}([0,L]; C_0^{\infty}(\Sigma))$. Define the trajectory $(\bar{x}_2^i(\tau;x),\bar{x}_3^i(\tau;x))$ as in \eqref{char}, then there holds
\begin{align*}
    \omega_1^{i} &=\sigma J_0^i(\bar{x}_2^i(0;x),\bar{x}_3^i(0;x))e^{-\int_0^{x_1} \mu_i(\bar{\mathbf{V}})(t;\bar{x}_2^i(t;x),\bar{x}_3^i(t;x)) dt}  \\
    & + \int_0^{x_1} R_i(\tau;\bar{x}_2^i(\tau;x),\bar{x}_3^i(\tau;x))
    e^{-\int_0^{x_1} \mu_i(\bar{\mathbf{V}})(t;\bar{x}_2^i(t;x),\bar{x}_3^i(t;x)) dt} d\tau
\end{align*}
 and $\omega_1^{i}|_{x_1=\xi}\in C_0^{\infty}(\Sigma)$ for any $\xi\in [0,L]$. By the estimates in Lemma \ref{lemma3-4}, the sequence $\{\omega_1^{i}\}_{i=1}^{\infty}$ is bounded in $H^2(\Omega)$, one can extract a subsequence which is weakly convergent to some function $\omega_1\in H^2(\Omega)$. Similar to the proof of Proposition \ref{lemma3-5}, one can prove that $\omega_1\in L^{\infty}([0,L];H_0^2(\Sigma)) \cap W^{1,\infty}([0,L];H^1_0(\Sigma))$ and $\omega_1$ is the unique solution to \eqref{3-20}.

%\par Due to the Proposition 3.4 we can bound the sequence $\{{\omega_1}_{i,k_i}\}_i^{\infty}$ in $H^2(\Sigma)$ for given $\xi \in [0,L]$. This means that we can extract a weakly convergent subsequence with limit ${\hat{\omega}}_{1_\xi} \in H^2(\Sigma)$. For any linear functional $l$ on $H^2(\Sigma)$ we can define $l_{\xi}(g) = l(g|_{x_1 = \xi})$ as a linear functional on $H^1([0,L];H^1(\Sigma))$ since $C([0,L];H^1(\Sigma) \subset H^1([0,L];H^1(\Sigma)) $. Now
%\begin{equation*}
%    l({\omega_1}|_{x_1 = \xi}) = l_{\xi}({\omega_1}) = \lim_{i \to \infty} l_{\xi}({\omega_1}_{i,k_i}) = \lim_{i \to \infty} l({\omega_1}_{i,k_i}|_{x_1 = \xi}) = l({\hat{\omega}}_{1_\xi})
%\end{equation*}
%for all linear funtionals on $H^1(\Sigma)$ and hence ${\omega_1}|_{x_1 =\xi} = {\hat{\omega}}_{1_\xi}$. However, since ${\hat{\omega}}_{1_\xi} \in H^2(\Sigma)$ the equality also holds in $H^2(\Sigma)$. Using that the trace operator $T$ is bounded and linear we get that
%\begin{equation*}
%    T{\omega_1}|_{x_1 = \xi } = T{\hat{\omega}}_{1_\xi} = w - \lim_{i \to \infty} l_{\xi}(T{\omega_1}_{i,k_i}|_{x_1 = \xi}) = 0.
%\end{equation*}
%Similarly we get the $T\frac{\p {\omega_1}|_{x_1 = \xi}}{\p x_2} = T\frac{\p {\omega_1}|_{x_1 = \xi}}{\p x_3}  = 0$. It follows that ${\omega_1}|_{x_1 = \xi} \in H_0^2(\Sigma)$ and hence ${\omega_1} \in  L^{\infty}([0,L];H_0^2(\Sigma)) $. By the same reasoning we get $ \p_1 {\omega_1}|_{x_1 = \xi} \in H_0^1(\Sigma)$, which together with ${\omega_1}|_{x_1 =\xi} \in H_0^2(\Sigma)$ gives ${\omega_1} \in W^{1,\infty}([0,L];H_0^1(\Sigma)$.

\par Then it follows from \cite[Chapter 1, Theorem 3.1 ]{JE72} that $\omega_1 \in C([0,L];H^{3/2}(\Sigma))$ and $\p_1 \omega_1 \in C([0,L];H^{1/2}(\Sigma))$ and hence
\begin{equation*}
    \omega_1 \in C_w([0,L];H_0^2(\Sigma)),\p_1 \omega_1 \in C_w([0,L];H_0^1(\Sigma))
\end{equation*}
by \cite[Chapter 3, Lemma 8.1 ]{JE72}. It is therefore that $\omega_1$ satisfies the conditions \eqref{351}.
%\par And the existence and uniqueness of the solution to \eqref{3-21} can be checked in a similar way of Proposition 3.3.
\end{proof}

\par It follows from Proposition \rm{3.5} and \eqref{3-a} that \eqref{3-20}  has a unique solution $ \om_1\in H^2(\Omega) $ satisfying
\begin{equation*}
    \|\om_1\|_{2,\Omega} \leq C(\sigma \|J_0\|_{2,\Sigma}+ \|R\|_{2,\Omega})\leq C\sigma,
\end{equation*}
where $C=C(L,\Sigma,\|B_0,K_0\|_{3,\Sigma},\|J_0\|_{2,\Sigma})$. Then it follows from \eqref{w2} and \eqref{w3} that
\begin{align}\label{3-28}
    \om_2 = \frac{\bar{V}_2\om_1 + \p_3V_4 - \frac{1}{\gamma-1}H^{\gamma-1}(\bar{\mathbf{V}})\p_3V_5}{u_0 + \bar{V}_1},\\ \label{3-29}
    \om_3 = \frac{\bar{V}_3\om_1 - \p_2V_4 + \frac{1}{\gamma-1}H^{\gamma-1}(\bar{\mathbf{V}})\p_2V_5}{u_0 + \bar{V}_1}.
\end{align}
\par Then \(\om_2\) and \(\om_3\) also belong to \(H^2(\Omega)\) with the estimate
\be\label{om23}
\sum_{j=2}^3\|\om_j\|_{2,\Om}\leq C\bigg(\sum_{j=2}^3\|\bar{V}_j\|_{2,\Om}\|\om_1\|_{2,\Om}+\sum_{j=4}^5\|V_j\|_{3,\Omega}\bigg)\leq C\sigma
\ee
where  $C = C(L,\Sigma,\|(B_{0},K_{0})\|_{3,\Sigma},\|J_0\|_{2,\Sigma})$.

We verify that
\begin{eqnarray}\label{2332}
    \begin{aligned}
        \quad n_2 \omega_3 - n_3 \omega_2
         &= \frac{1}{\bar u + \bar V_1}\left( n_2 \bar V_3 \om_1 -n_2\p_2 V_4 - \frac{1}{\gamma-1}(H(\bar{\mathbf{V}}))^{\gamma - 1} n_2\p_2V_5 \right. \\
         & \left. - n_3 \bar V_2 \om_1 -n_3\p_3 V_4 + \frac{1}{\gamma-1}(H(\bar{\mathbf{V}}))^{\gamma-1} n_3\p_3V_5 \right) \\
        & = 0,\quad on \; \p\Gamma_0 \cup \p\Gamma_L
    \end{aligned}
\end{eqnarray}
since $ \omega_1|_{x_1=0,L} \in H_0^2(\Sigma)$ and $(V_4,V_5)|_{x_1=0,L} \in H_0^3(\Sigma)$.\\

Moreover, we also verify that $\sum_{j=1}^3\p_{x_j}\om_j = 0$. Indeed, one has
\begin{align*}
    &\quad \p_1\om_1+\p_2\om_2+\p_3\om_3 \\
    &= \p_1\om_1 + \p_2\left(\frac{\bar{V}_2\om_1 + \p_3V_4 - \frac{1}{\gamma-1}(H(\bar{\mathbf{V}}))^{\gamma-1}\p_3V_5}{u_0 + \bar{V}_1}\right)\\
    &\quad\quad+ \p_3\left(\frac{\bar{V}_3\om_1 - \p_2V_4 - \frac{1}{\gamma-1}(H(\bar{\mathbf{V}}))^{\gamma-1}\p_2V_5}{u_0 + \bar{V}_1}\right)\\
    & =\p_1\om_1 + \frac{\bar{V}_2}{u_0 + \bar{V}_1}\p_2\om_1 + \frac{\bar{V}_3}{u_0 + \bar{V}_1}\p_3\om_1 + \left[\p_2\left(\frac{\bar{V}_2}{u_0 + \bar{V}_1}\right) + \p_3\left(\frac{\bar{V}_3}{u_0 + \bar{V}_1}\right)\right]\om_1\\
    & \quad\quad+ \bigg[ \p_2\left(\frac{1}{u_0 + \bar{V}_1}\right)\p_3V_4-\p_3\left(\frac{1}{u_0 + \bar{V}_1}\right)\p_2 V_4 + \frac{1}{\gamma-1}\p_3\left(\frac{(H(\bar{\mathbf{V}}))^{\gamma-1}}{u_0 + \bar{V}_1}\right)\p_2 V_5\\
    &\quad\quad-\frac{1}{\gamma-1}\p_2\left(\frac{(H(\bar{\mathbf{V}}))^{\gamma-1}}{u_0 + \bar{V}_1}\right)\p_3 V_5\bigg]=0
\end{align*}
which gives us that
\be\label{23}
(\p_2 \omega_2 +\p_3 \omega_3)|_{x_1 = 0,L} = -\p_1 \omega_1|_{x_1 = 0,L} \in H_0^1(\Sigma)\subset H_{00}^{1/2}(\Sigma).
\ee
%\par Thus $(\om_2 ,\om_3) \in H^2(\Omega)$ with the estimate
%\begin{equation*}
%    \|(\om_2,\om_3)\|_{2,\Omega} \leq C(\sigma+\delta^2+\sigma\delta) ,
%\end{equation*}

\par{\bf Step 3. Resolving the velocity field.}
\par In this step, we solve the following linearized deformation-curl system:
\begin{align}\label{3-30}
    \begin{cases}
        (1-\bar M^2)\p_1V_1 + \p_2V_2 + \p_3V_3 = F(\bar{\mathbf{V}},\nabla\bar{\mathbf{V}}),\\
        \p_2V_3 - \p_3V_2 = \om_1,\\
        \p_3V_1 - \p_1V_3 = \om_2,\\
        \p_1V_2 - \p_2V_1 = \om_3,\\
        V_1(0,x^{\prime}) = g_1(\bar{\mathbf{V}})(0,x^{\prime}),\\
        V_1(L,x^{\prime}) = g_2(\bar{\mathbf{V}},V_4,V_5)(L,x^{\prime})+ \kappa ,\\
        n_2V_2 + n_3V_3 = 0 \quad \text{on} \; \Gamma_{\om},
    \end{cases}
\end{align}
where
\begin{equation*}
    \kappa= \frac{1}{|\Sigma|}\left(\int_{\Sigma} g_1{x^{\prime} dx^{\prime} }  -  \int_{\Sigma} g_2{x^{\prime} dx^{\prime} }   + \int_{\Omega}  \frac{1}{1-\bar M^2}F(\bar{\mathbf{V}},\nabla\bar{\mathbf{V}})dx    \right),
\end{equation*}
\begin{equation*}
    \begin{aligned}
      &F(\bar{\mathbf{V}},\nabla\bar{\mathbf{V}}) = \left( \frac{(\bar u + \bar{V}_1)^2}{c^2(H(\ol{\mathbf{V}}),\bar K + \bar{V}_5)} - \bar{M}^2 \right)\p_1\bar{V}_1 + \sum_{j=2}^3 \frac{(\bar u + \bar{V}_1)\bar{V_j}}{c^2(H(\ol{\mathbf{V}}),\bar K + \bar{V}_5)}\p_1\bar{V}_j \\
    &\quad \quad+ \sum_{i,j=2}^3 \frac{\bar{V}_i\bar{V}_j}{c^2(H(\ol{\mathbf{V}}),\bar K + V_5)}\p_i\bar{V}_j,
    \end{aligned}
\end{equation*}
\begin{equation*}
    \begin{aligned}
& g_1(\bar{\mathbf{V}})(x')
     =\frac{\sigma m_0}{\bar \rho(1-\bar M^2)}-\frac{\sigma \bar u B_0}{c^2(\bar \rho,\bar K)-\bar u^2}+\frac{\sigma\bar u K_0}{(\gamma -1)\bar K(1-\bar M^2)}\\
     &\quad - \frac{\bar{V}_1}{\bar\rho(1-\bar M^2)}\bigg(H(\bar B+\sigma B_0,\bar K+\sigma K_0,(\bar u+\bar{V}_1)^2 + \bar{V}_2^2+\bar{V}_3^2)-H(\bar B,\bar K,\bar u^2)\bigg)(0,x')\\
     &\quad - \frac{\bar u}{\bar\rho(1-\bar M^2)}\bigg(H(\bar B+\sigma B_0,\bar K+\sigma K_0,(\bar u+\bar{V}_1)^2 + \bar{V}_2^2+\bar{V}_3^2)-H(\bar B,\bar K,\bar u^2)\\
     &\quad  -\frac{\sigma \bar\rho B_0}{c^2(\bar\rho,\bar K)} + \frac{\sigma \bar\rho K_0}{(\gamma-1)\bar K} +\bar M^2 \bar{V}_1\bigg)(0,x'),
    \end{aligned}
\end{equation*}
\begin{equation*}
\begin{aligned}
&g_2(\bar{\mathbf{V}},V_4,V_5)(x')
    =\frac{\sigma m_{L}}{\bar\rho(1-\bar M^2)} -\frac{\bar u V_4(L,x^{\prime})}{c^2(\bar\rho,\bar K) - \bar u^2}+\frac{\bar uV_5(L,x^{\prime})}{(\gamma-1)\bar K(1-\bar M^2)}\\
   &\quad - \frac{\bar{V}_1}{\bar\rho(1-\bar M^2)}\bigg(H(\bar B+\bar{V}_4,\bar K+\bar{V}_5,(\bar u+\bar{V}_1)^2 + \bar{V}_2^2+\bar{V}_3^2)-H(\bar B,\bar K,\bar u^2)\bigg)(L,x')\\
     &\quad - \frac{\bar u}{\bar\rho(1-\bar M^2)}\bigg(H(\bar B+\bar{V}_4,\bar K+\bar{V}_5,(\bar u+\bar{V}_1)^2 + \bar{V}_2^2+\bar{V}_3^2)-H(\bar B,\bar K,\bar u^2)\\
     &\quad  -\frac{\bar\rho \bar{V}_4}{c^2(\bar\rho,\bar K)} + \frac{ \bar\rho\bar{V}_5}{(\gamma-1)\bar K} +\bar M^2 \bar{V}_1 \bigg)(L,x'),\\
\end{aligned}
\end{equation*}
here the constant $\kappa$ is introduced to guarantee that the solvability condition for the deformation-curl system with normal boundary condition holds.

\par We use the Duhamel's principal to solve the problem \eqref{3-30}. First, consider the standard div-curl system with homogeneous normal boundary conditions:
\begin{align}\label{3-31}
    \begin{cases}
        \p_1\dot{V_1} + \p_2\dot{V_2} + \p_3\dot{V_3} = 0,\\
        \text{curl }{\bf \dot{V}}= (\om_1,\om_2,\om_3)^t,\\
        %\p_2\dot{V_3} - \p_3\dot{V_2} = \om_1,\\
%        \p_3\dot{V_1} - \p_1\dot{V_3} = \om_2,\\
%        \p_1\dot{V_2} - \p_2\dot{V_1} = \om_3,\\
        \dot{V_1}(0,x') = \dot{V_1}(L,x') = 0,\\
        n_2\dot{V_2} + n_3\dot{V_3} = 0 \quad \text{on} \; \Gamma_{\om}.
    \end{cases}
\end{align}

\par Since the domain $\Om$  is a cylinder, so the boundary condition needs to be interpreted with some care. Since we seek a solution in \(H^3(\Om) \subset C^1(\Bar{\Omega})\) the meaning of the condition is clear on \(\Gamma_0\), \(\Gamma_L\) and \(\p \Sigma \times (0,L)\). For the edges we need the boundary conditions to be compatible so we interpret \(\dot{\mathbf{V}}\cdot \bf n\)  as
\begin{equation*}
    \begin{cases}
       {\bf n}' \cdot \dot{\mathbf{V}}'= 0,\\
        \dot{V_1} = 0,
    \end{cases}
\end{equation*}
on \( \p \Gamma_0\) and \(\p \Gamma_L\), where $\dot{\mathbf{V}}'= (\dot{V_2},\dot{V_3})$ and $ {\bf n}'=(n_2,n_3)$ is the normal to $\p \Sigma$.\\[2ex]

\par The unique solvability of \eqref{3-31} on the Soblev space $H^3(\Omega)$ had been proved in \cite[Theorem  \rm{4.19}]{SS16} by using the vector potential formulation of the div-curl system and the separation of variables.

\begin{lemma}\label{lemma3-6}

Assume that the vector field $(\omega_1,\omega_2,\omega_3)\in (H^2(\Omega))^3$ satisfies the divergence free condition $\displaystyle\sum_{i=1}^3 \p_i \omega_i=0$ in $\Omega$, $n_2\om_3 - n_3\om_2 = 0$ on $\p\Gamma_0$ and $\p\Gamma_L$ and  $\p_1 \om_1|_{x_1=0},\p_1 \om_1|_{x_1=L} \in H^{1/2}_{00}(\Sigma)$, then there exists a unique solution $\dot{\mathbf{V}}$ to \eqref{3-31} in $H^3(\Omega)$ satisfying the following estimates
\begin{equation*}
    \|\dot{\mathbf{V}}\|_{1,\Omega} \leq C \sum_{i=1}^3\|{\omega}_i\|_{0,\Omega},
\end{equation*}
\text{and}
\begin{equation*}
\begin{aligned}
    \|\dot{\mathbf{V}}\|_{3,\Omega} &\leq C\left(\sum_{i=1}^3\|{\omega}_i\|_{2,\Omega}+ \|\p_1 \om_1|_{x_1=0}\|_{H^{1/2}_{00}(\Gamma_0)} +  \|\p_1 \om_1|_{x_1=L}\|_{H^{1/2}_{00}(\Gamma_L)}\right).
    \end{aligned}
\end{equation*}
 \text{where} $C= C(\Omega)$.
\end{lemma}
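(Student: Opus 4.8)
The plan is to reduce the div-curl system \eqref{3-31} to a vector Poisson problem through a vector potential, and then to exploit the product structure $\Om=(0,L)\times\Si$ via separation of variables to push the regularity up to the edges $\p\Ga_0\cap\p\Ga_w$ and $\p\Ga_L\cap\p\Ga_w$. Since $\Si$ is simply connected, $\Om$ is simply connected with connected boundary, so the hypothesis $\sum_i\p_i\om_i=0$ guarantees a vector potential $\mathbf{A}$ with $\curl\mathbf{A}=\dot{\mathbf{V}}$, and the Coulomb gauge $\div\mathbf{A}=0$ turns $\curl\dot{\mathbf{V}}=\boldsymbol{\om}$ into the componentwise system $-\De\mathbf{A}=\boldsymbol{\om}$ (taking the divergence shows $\div\mathbf{A}$ is harmonic, and the boundary conditions below force it to vanish, so the gauge is preserved). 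The normal condition $\dot{\mathbf{V}}\cdot\mathbf{n}=(\curl\mathbf{A})\cdot\mathbf{n}=0$ is encoded by demanding $\mathbf{A}\times\mathbf{n}=0$ on $\p\Om$: on the caps $\mathbf{n}=\mp\mathbf{e}_1$ this reads $A_2=A_3=0$ on $\Ga_0,\Ga_L$, while on the wall $\mathbf{n}=(0,n_2,n_3)$ it reads $A_1=0$ and $n_3A_2-n_2A_3=0$ on $\Ga_w$. Together with the complementary Neumann data supplied by the gauge (for instance $\p_1A_1=0$ on the caps, where $A_2=A_3$ vanish identically in $x'$), each scalar component $A_j$ then solves a Poisson equation on $\Om$ with mixed Dirichlet--Neumann boundary conditions.

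The two estimates are then established separately. The $H^1$ bound is the soft part: a Gaffney--Friedrichs-type identity $\int_\Om|\na\dot{\mathbf{V}}|^2=\int_\Om(|\div\dot{\mathbf{V}}|^2+|\curl\dot{\mathbf{V}}|^2)+\int_{\p\Om}(\text{curvature terms})$, combined with $\div\dot{\mathbf{V}}=0$ and $\dot{\mathbf{V}}\cdot\mathbf{n}=0$ (which makes $\dot{\mathbf{V}}$ tangential and controls the boundary integral by the second fundamental form of $\p\Om$), yields $\|\dot{\mathbf{V}}\|_{1,\Om}\le C\sum_i\|\om_i\|_{0,\Om}$. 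For the $H^3$ bound I would diagonalize the cross-sectional Laplacian $-\De'$ on $\Si$ in both the Dirichlet basis $\{\phi_k,\la_k\}$ and the Neumann basis $\{\psi_k,\mu_k\}$, expand each $A_j$ and each $\om_j$ in the basis matching its boundary condition, and reduce $-\De\mathbf{A}=\boldsymbol{\om}$ to a decoupled family of two-point problems $-a_k''(x_1)+\la_k a_k(x_1)=(\om_j)_k(x_1)$ on $(0,L)$. Solving these ODEs explicitly and invoking the anisotropic characterization in Lemma \ref{theoremH3}, the membership $\dot{\mathbf{V}}\in H^3(\Om)$ reduces to summable weighted bounds on $\sum_k(1+\la_k)^{s}\|a_k\|_{H^{3-s}(0,L)}^2$ for $s=0,1,2,3$.

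The hard part, and the whole reason separation of variables is needed, is the regularity at the edges where the Dirichlet and Neumann faces meet at a right angle. For generic data the solution of such a mixed problem carries a corner singularity of type $r^{1/2}$, which caps the regularity at $H^{3/2-\eps}$ and destroys the weighted sums above. The two hypotheses are precisely the compatibility conditions that annihilate the offending singular mode. The edge condition $n_2\om_3-n_3\om_2=0$ on $\p\Ga_0$ and $\p\Ga_L$ (see \eqref{2332}) removes the tangential obstruction for the pair $(A_2,A_3)$, matching the Dirichlet data on the caps with the rotational condition on the wall. The condition $\p_1\om_1|_{x_1=0,L}\in H^{1/2}_{00}(\Si)$ --- which by \eqref{23} equals $-(\p_2\om_2+\p_3\om_3)|_{x_1=0,L}$ --- is exactly the statement that this trace extends by zero across the edge, so that its coefficients in the eigenbasis decay like those of an $H^{1/2}_{00}$ function; this decay is what makes the $H^3$ series converge and produces the boundary contributions $\|\p_1\om_1|_{x_1=0,L}\|_{H^{1/2}_{00}}$ in the stated estimate. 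Quantifying this mode-by-mode decay and identifying it with the Lions--Magenes norm is the crux, and is the step I expect to be the main obstacle, since a direct elliptic-regularity argument on the nonsmooth domain cannot see it.

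Uniqueness is immediate: if $\boldsymbol{\om}=0$ then $\curl\dot{\mathbf{V}}=0$ on the simply connected $\Om$ gives $\dot{\mathbf{V}}=\na\phi$, while $\div\dot{\mathbf{V}}=0$ and $\dot{\mathbf{V}}\cdot\mathbf{n}=0$ make $\phi$ harmonic with vanishing normal derivative, hence constant, so $\dot{\mathbf{V}}=0$; linearity of \eqref{3-31} then gives uniqueness in general. In fact the entire construction, including the delicate edge analysis, is carried out in \cite{SS16}, so it suffices to verify that the hypotheses $\sum_i\p_i\om_i=0$, $n_2\om_3-n_3\om_2=0$ on $\p\Ga_0\cup\p\Ga_L$, and $\p_1\om_1|_{x_1=0,L}\in H^{1/2}_{00}(\Si)$ are met, which is supplied by \eqref{2332} and \eqref{23}.
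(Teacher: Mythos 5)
Your proposal is correct and follows essentially the same route as the paper: the paper gives no independent proof of this lemma, instead citing \cite[Theorem 4.19]{SS16}, whose method --- the vector potential formulation of the div-curl system combined with separation of variables in the cross-sectional eigenbasis to handle the edge regularity --- is exactly what you sketch before likewise deferring to \cite{SS16}. Your additional outline (Coulomb gauge, $\mathbf{A}\times\mathbf{n}=0$ boundary conditions, the role of the two edge compatibility hypotheses in removing the $r^{1/2}$ corner singularity, and verification via \eqref{2332} and \eqref{23}) is a faithful expansion of the cited argument rather than a different approach.
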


\par Since $\sum_{j=1}^3\p_j\om_j = 0$, $\om_j \in H^2(\Omega)$ for all $j=1,2,3$ and \eqref{2332},\eqref{23}, all the conditions in Lemma \ref{lemma3-6}, there exists a unique vector field \(\dot{\mathbf{V}} \in H^3(\Omega)\) such that
\begin{equation*}\label{v1}
    \|(\dot{V}_1,\dot{V}_2,\dot{V}_3)\|_{3,\Omega} \leq C\sum_{i=1}^3\|{\omega}_i\|_{2,\Omega}.
\end{equation*}

\par  Let \(\bm W = \bm V- \dot{\bm V}\). Then $ \bm W $ satisfies
\begin{align}\label{3-32}
    \begin{cases}
        (1-\bar M^2)\p_1W_1 + \p_2W_2 + \p_3W_3 = F(\bar{{\bf V}},\nabla \bar{{\bf V}})+ \bar M^2\p_1\dot{V_1},\\
        \text{curl }{\bm W}=0,\\
        W_1(0,x^{\prime}) = g_1(x^{\prime}),\\
        W_1(L,x^{\prime}) = g_2(x^{\prime})+\kappa,\\
        n_2W_2 + n_3W_3 = 0 \quad \text{on} \; \Gamma_{\om}.
    \end{cases}
\end{align}
Since $\dot{V}_1(0,x')=\dot{V}_1(L,x')=0$ for any $x'\in\Sigma$, the following solvability condition for \eqref{3-32} holds:
\begin{align}\label{3-33}
    \int_{\Sigma}g_1(x^{\prime})dx^{\prime}+\int_0^L \int_{\Si}\frac{1}{1-\bar{M}^2}( F(\bar{{\bf V}},\nabla \bar{{\bf V}})+ \bar M^2\p_1\dot{V_1}) dx' dx_1=\int_{\Sigma}g_2(x^{\prime}) + \kappa\;dx^{\prime}.
\end{align}
Since \( \text{curl}\ \bm W = 0\) in \(\Omega\), there exists a potential function \(\phi(x)\) with mean zero on $\Omega$ such that $\bm W = \nabla\phi(x)$. Then $\phi$ solves
\begin{eqnarray}\label{po1}
    \begin{cases}
        (1-\bar M^2)\p_1^2\phi + \p_2^2\phi + \p_3^2\phi = \Tilde{f},\\
        \p_1\phi(0,x^{\prime}) = g_1(x^{\prime}),\p_1\phi(L,x^{\prime}) = g_2(x^{\prime})+\kappa, \quad \forall x^{\prime} \in \Sigma\\
        \frac{\p \phi}{\p n}(x,x^{\prime}) = 0 ,\quad \text{on} \; \Gamma_{\om} \\
        \int_0^L \int_{\Sigma} \phi(x_1,x') dx' dx_1=0
    \end{cases}
\end{eqnarray}
where $\tilde{f}(x_1,x')= F(\bar{{\bf V}},\nabla \bar{{\bf V}})+ \bar M^2\p_1\dot{V_1} $. It is well-known that the following eigenvalue problem
\begin{equation*}
    \begin{cases}
        -(\p_2^2 + \p_3^2)e(x^{\prime}) = \lambda e(x^{\prime}), \quad &\forall x^{\prime}\in \Sigma \\
        (n_2\p_2 + n_3\p_3)e(x^{\prime}) = 0, \quad  &\text{on} \; x^{\prime}\in \p\Sigma
    \end{cases}
\end{equation*}
has a family of eigenvalues $ 0= \lambda_0 < \lambda_1 \leq \lambda_2 \leq ... $  and $ \lambda_n \to +\infty$  as $n \to +\infty$ with corresponding eigenvectors $(e_n(x'))_{n=0}^{\infty}$ which constitute an orthonormal basis in $L^2(\Sigma)$. The eigenvector associated with $\la_0=0$ is $e_0(x^{\prime}) = \frac{1}{|\Sigma|^{1/2}}$.

We express $\phi(x_1,x')$ as generalized Fourier series
\begin{equation}\label{four1}
    \phi(x_1,x') = \sum_{n=0}^{\infty} a_n(x_1)e_n(x^{\prime}).
\end{equation}
Substituting \eqref{four1} into \eqref{po1} yields
\begin{eqnarray}\label{an}
    \begin{cases}
        \left( 1-\bar M^2\right)a_n''(x_1) - \lambda_n a_n(x_1) = \tilde{f}_n(x_1),\\
        a_n'(0) = b_{1,n},\ \ \  a_n'(L) = b_{2,n},
    \end{cases}
\end{eqnarray}
for every $n$, where
\begin{equation*}
b_{1,n} = \int_{\Sigma}g_1(x^{\prime})e_n(x^{\prime})dx^{\prime},\ b_{2,n} = \int_{\Sigma}g_2(x^{\prime})e_n(x^{\prime}) + \kappa e_n(x^{\prime}) dx^{\prime}, \ \Tilde{f}_n(x_1)= \int_{\Sigma}\tilde{f}(x_1,x')e_n(x')dx'.
\end{equation*}
\par For $ n = 0,$ where $\lambda_0 = 0$, the equation becomes
\begin{equation}\label{3-34}
    \begin{cases}
        \left( 1-\bar M^2\right)a_0^{\prime\prime}(x_1) = \Tilde{f}_0(x_1),\\
        a_0^{\prime}(0) = b_{1,0} = \frac{1}{|\Sigma|^{1/2}} \int_{\Sigma}g_1(x^{\prime})dx^{\prime},\\
        a_0^{\prime}(L) = b_{2,0} = \frac{1}{|\Sigma|^{1/2}} \int_{\Sigma}g_2(x^{\prime})+ \kappa \;dx^{\prime},\\
        \int_0^L a_0(x_1) dx_1=0.
    \end{cases}
\end{equation}
Thanks to \eqref{3-33}, the solvability condition for \eqref{3-34} holds, which guarantees the existence and uniqueness of solutions to the equation  \eqref{3-34}. Indeed, one has
\begin{eqnarray}\nonumber
    &&a_0(x_1)= \int_0^{x_1} \left( \frac{1}{|\Sigma|^{1/2}}\int_{\Sigma}g_1(x^{\prime})dx^{\prime} + \int_0^t \frac{1}{1-\bar M^2}\Tilde{f}_0(s)ds  \right)dt\\\nonumber
    &\quad&\quad\quad\quad-\frac{1}{L} \int_0^L \int_0^{x_1} \left( \frac{1}{|\Sigma|^{1/2}}\int_{\Sigma}g_1(x^{\prime})dx^{\prime} + \int_0^t \frac{1}{1-\bar M^2}\Tilde{f}_0(s)ds  \right)dt dx_1.
\end{eqnarray}
For $n > 0 $, the corresponding homogeneous equation becomes
\begin{equation}\label{3-35}
    \begin{cases}
        \left( 1-\bar M^2\right)a_n''(x_1) - \lambda_n a_n(x_1) = 0,\\
        a_n'(0) = a_n'(L) = 0.
    \end{cases}
\end{equation}
Since $\la_n>0$ for any $n>0$, by the maximum principle and Hopf's Lemma, the equation \eqref{3-35} has only zero solution. Then by the Fredholm Alternative Theorem to second order elliptic equation, the equation \eqref{an} for $n>0$ has a unique classical solution $a_n$. Decompose $a_n=c_n(x_1)+d_n(x_1)$, where the function $c_n(x_1)$ solves the equation \eqref{an} with homogeneous boundary data and $d_n(x_1)$ solves the
\begin{equation}\nonumber
    \begin{cases}
        \left( 1-\bar M^2\right)d_n'(x_1) - \lambda_n d_n(x_1) = 0,\\
        d_n^{\prime}(0) = b_{1,n},\ \ \ \ d_n^{\prime}(L) = b_{2,n}.
    \end{cases}
\end{equation}
Integration by parts yields that for $\tilde{\lambda}_n =\frac{\lambda_n}{1-\bar M^2}$
\be\no
&&\int_0^L \tilde{f}_n^2(x_1)dx_1=\int_0^L (c_n''-\tilde{\lambda}_n c_n)^2 dx_1=\int_0^L (|c_n''|^2+2\tilde{\lambda}_n |c_n'|^2+ \tilde{\lambda}_n^2 |c_n|^2 dx_1,\\\no
&&\int_0^L (|c_n^{(4)}|^2+\tilde{\lambda}_n |c_n^{(3)}|^2 d x_1=\int_0^L (\tilde{\lambda}_n c_n''+\tilde{f}_n'')^2+ \tilde{\lambda}_n(\tilde{\lambda}_n c_n'+ \tilde{f}_n')^2 dx_1\\\no
&&\leq \int_0^L \tilde{\lambda}_n^2 |c_n''|^2+ \tilde{\lambda}_n^3 |c_n'|^2+ |\tilde{f}_n''|^2+\tilde{\lambda}_n |\tilde{f}_n'|^2) dx_1
\ee
and thus
\be\no
\int_0^L (|c_n^{(4)}|^2+\tilde{\lambda}_n |c_n^{(3)}|^2+\tilde{\lambda}_n^2 |c_n''|^2+ \tilde{\lambda}_n^3 |c_n'|^2+\tilde{\lambda}_n^4 |c_n|^2) d x_1\lesssim \int_0^L (|\tilde{f}_n''|^2+\tilde{\lambda}_n |\tilde{f}_n'|^2+\tilde{\lambda}_n^2 |\tilde{f}_n|^2) dx_1.
\ee

For the functions $d_n$, there is an explicit formulae
\begin{equation*}
    d_n(x_1) = -b_{1,n}\frac{cosh\left(\sqrt{\tilde{\lambda}_n}(x_1 -L)\right)}{\sqrt{\tilde{\lambda}_n}sinh\left(\sqrt{\tilde{\lambda}_n} L\right)} +
    b_{2,n}\frac{cosh\left(\sqrt{\tilde{\lambda}_n}x_1 \right)}{\sqrt{\tilde{\lambda}_n}sinh\left(\sqrt{\tilde{\lambda}_n} L\right)}.
\end{equation*}
Squaring and integrating gives
\be\no
&&\int_0^L (|d_n^{(4)}|^2+\tilde{\lambda}_n |d_n^{(3)}|^2 +\tilde{\lambda}_n^2 |d_n''|^2+3\tilde{\lambda}_n^3 |d_n'|^2+ 2\tilde{\lambda}_n^4 |d_n|^2) d x_1 \\\no
&&=2\int_0^L (\tilde{\lambda}_n^2 |d_n''|^2 + 2\tilde{\lambda}_n^3 |d_n'|^2 +\tilde{\lambda}_n^4 |d_n|^2)dx_1\\\no
&&=2\tilde{\lambda}_n^2\int_0^L (d_n''-\tilde{\lambda}_n d_n)^2 dx_1 +2\tilde{\lambda}_n^3 (d_n(L)d_n'(L)-d_n(0)d_n'(0))\\\no
&&=2\tilde{\lambda}_n^{5/2}\frac{-2b_{1,n}b_{2,n}+(b_{1,n}^2+b_{2,n}^2)\cosh(\sqrt{\tilde{\lambda}_n}L)}{\sinh(\sqrt{\tilde{\lambda}_n}L)}\\\no
&&\lesssim \tilde{\lambda}_n^{5/2}(|b_{1,n}|^2+|b_{2,n}|^2).
\ee
Combining the above estimates, one has
\be\nonumber
&&\|(W_1,W_2,W_3)\|_{3,\Omega}\leq\|\phi\|_{H^4{(\Omega)}}\leq C(\|\tilde{f}\|_{2,\Omega}+\|g_1\|_{\frac{5}{2},\Sigma}+\|g_2+\kappa\|_{\frac{5}{2},\Sigma})\\\no
&&\leq C(\|F\|_{2,\Om}+\|\dot{V}_1\|_{3,\Om}++\|g_1\|_{\frac{5}{2},\Sigma}+\|g_2+\kappa\|_{\frac{5}{2},\Sigma}).
\ee

Note that
\be\no
&&\|g_1\|_{\frac{5}{2},\Sigma} \leq C(\|m_0\|_{\frac{5}{2},\Sigma},\|(B_0,K_0))\|_{3,\Sigma})(\sigma +\sigma\delta +\delta^2),\\\no
&&\|g_2+\kappa\|_{\frac{5}{2},\Sigma} \leq C(\|m_L\|_{\frac{5}{2},\Sigma},\|(B_0,K_0))\|_{3,\Sigma})(\sigma +\sigma\delta +\delta^2)+ C\|(V_4,V_5)\|_{3,\Omega},
\ee
thus
\begin{align} \nonumber
    \sum_{i=1}^3\|V_i\|_{3,\Omega}
    &\leq \sum_{i=1}^3\|W_i\|_{3,\Omega} + \sum_{i=1}^3\|\dot{V}_i\|_{3,\Omega}\\ \nonumber
    &\leq C(\|g_i\|_{\frac{5}{2},\Sigma} + \|F\|_{2,\Omega} + \|\dot{V}\|_{3,\Omega})\leq C(\sigma+\delta^2+\sigma\delta),
\end{align}
where  $C = C(\Sigma,L,\|(B_{0},K_{0})\|_{3,\Sigma},\|J_0\|_{2,\Sigma},\|m_0\|_{\frac{5}{2},\Sigma},\|m_L\|_{\frac{5}{2},\Sigma})  $.

\par In a word, we have constructed a new $ \mathbf{V} \in H^3(\Omega)$ and
\begin{equation}\nonumber
    \sum_{i=1}^5\|V_i\|_{3,\Omega} \leq C_1(\sigma + \sigma\delta + \delta^2).
\end{equation}
Let $ \sigma_1 = \frac{1}{2(2C_1^2 + C_1)}$ and choose $\delta = 2C_1\sigma$ with $\sigma \leq \sigma_1$, then
\begin{equation*}
    \|\mathbf{V}\|_{\Xi_\delta} \leq C_1\sigma + C_1(\delta + \sigma)\delta = \frac{1}{2}\delta + C_1(2C_1\sigma + \sigma)\delta \leq \delta.
\end{equation*}
Hence $ \mathbf{V} \in \Xi_\delta$.  Define a mapping $ \mathcal{J}$ from $\Xi_\delta$ to itself by
\begin{equation*}
    \mathcal{J}(\bar{\mathbf{V}}) = \mathbf{V}.
\end{equation*}
Next we will show that $ \mathcal{J}$ is a contraction in $\Xi_\delta$. Let $\bar{\mathbf{V}}^k \in \Xi_\delta$, $ k =1,2$, we have $\mathbf{V}^k = \mathcal{J}(\bar{\mathbf{V}}^k)$ for $ k = 1,2$. Define
\begin{equation*}
    \mathbf{Y} = \mathbf{V}^1 - \mathbf{V}^2,\quad \bar{\mathbf{Y}} = \bar{\mathbf{V}}^1 - \bar{\mathbf{V}}^2.
\end{equation*}
We first estimate $Y_4$ and $Y_5$ . It follows from \eqref{3-7} and \eqref{3-8} that for $j=4,5$
\begin{equation}\nonumber
    \begin{cases}
        \p_1Y_j + \frac{\bar{V}_2^1}{\bar u + \bar{V}_2^1}\p_2Y_j + \frac{\bar{V}_3^1}{\bar u + \bar{V}_1^1}\p_3Y_j = - \left( \frac{\bar{V}_2^1}{\bar u + \bar{V}_1^1}- \frac{\bar{V}_2^2}{\bar u + \bar{V}_1^2}
 \right)\p_2V_j^2 - \left( \frac{\bar{V}_3^1}{\bar u + \bar{V}_1^1}- \frac{\bar{V}_3^2}{\bar u + \bar{V}_1^2}
 \right)\p_3V_j^2,\\
 Y_j(0,x') = 0,
    \end{cases}
\end{equation}
%and
%\begin{equation}\label{3-39}
%    \begin{cases}
%        \p_1Y_5 + \frac{\bar{V}_2^1}{\bar u + \bar{V}_2^1}\p_2Y_5 + \frac{\bar{V}_3^1}{\bar u + \bar{V}_1^1}\p_3Y_5 = - \left( \frac{\bar{V}_2^1}{\bar u + \bar{V}_1^1}- \frac{\bar{V}_2^2}{\bar u + \bar{V}_1^2}
% \right)\p_2V_5^2 - \left( \frac{\bar{V}_3^1}{\bar u + \bar{V}_1^1}- \frac{\bar{V}_3^2}{\bar u + \bar{V}_1^2}
% \right)\p_3V_5^2,\\
% Y_5(0,x') = 0.
%    \end{cases}
%\end{equation}
Then there holds
\be\no
    &&\|Y_4\|_{2,\Omega}+\|Y_5\|_{2,\Omega}\leq C \sum_{j=4}^5 \sum_{i=2}^3 \b\|\b(\frac{\bar{V}_i^1}{\bar u + \bar{V}_1^1}- \frac{\bar{V}_i^2}{\bar u + \bar{V}_1^2}
\bigg)\p_i V_j^2\b\|_{2,\Omega} \\\no
    &&\leq C\|\mathbf{V}^2\|_{3,\Omega}
    \sum_{k=1}^3\|\bar{\mathbf{Y}}_k\|_{2,\Omega}\leq C \delta\|\bar{\mathbf{Y}}\|_{2,\Omega}.
\ee
Define $J_k = \om_k^1 -\om_k^2$ for $ k = 1,2,3$. It follows from \eqref{3-20}, \eqref{3-28} and \eqref{3-29} that
\begin{equation}\nonumber
    \begin{cases}
        \p_1J_1 + \frac{\bar{V}_2^1}{\bar u + \bar{V}_1^1}\p_2J_1 + \frac{\bar{V}_3^1}{\bar u + \bar{V}_1^1}\p_3J_1 + \mu(\bar{\mathbf{V}}^1)J_1 = R(\bar{\mathbf{V}}^1,V_4^1,V_5^1) - R(\bar{\mathbf{V}}^2,V_4^2,V_5^2)\\
         - \left( \frac{\bar{V}_2^1}{\bar u + \bar{V}_1^1} - \frac{\bar{V}_2^2}{\bar u + \bar{V}_1^2}\right)\p_2\om_1^2
        - \left( \frac{\bar{V}_3^1}{\bar u + \bar{V}_1^1} - \frac{\bar{V}_3^2}{\bar u +\bar{V}_1^2}\right)\p_3\om_1^2 -(\mu(\bar{\mathbf{V}}^1 )-\mu(\bar{\mathbf{V}}^2))\om_1^2,\\
        J_1(0,x') = 0,
    \end{cases}
\end{equation}
and
\be\no
&&J_2= \frac{\bar{V}_2^1 + \p_3Y_4 - \frac{1}{\gamma-1} H^{\gamma-1}(\bar{\mathbf{V}}^1)\p_3Y_5}{\bar{u} + \bar{V}_1^1} + \left(
 \frac{\bar{V}_2^1}{\bar{u} + \bar{V}_1^1} -  \frac{\bar{V}_2^2}{\bar{u} + \bar{V}_1^2}  \right)\om_1^2\\ \nonumber
&&\quad\quad+\left(
 \frac{1}{\bar{u} + \bar{V}_1^1} -  \frac{1}{\bar{u} + \bar{V}_1^2}  \right)\p_3V_4^2 - \frac{1}{\gamma-1}\left( \frac{H^{\gamma-1}(\bar{\mathbf{V}}^1)}{\bar{u} + \bar{V}_1^1} - \frac{H^{\gamma-1}(\bar{\mathbf{V}}^2)}{\bar{u} + \bar{V}_1^1}
  \right)\p_3V_5^2,\\ \nonumber
&&J_3= \frac{\bar{V}_3^1 + \p_2Y_4 - \frac{1}{\gamma-1} H^{\gamma-1}(\bar{\mathbf{V}}^1)\p_2Y_5}{\bar{u} + \bar{V}_1^1} + \left(
 \frac{\bar{V}_3^1}{\bar{u} + \bar{V}_1^1} -  \frac{\bar{V}_3^2}{\bar{u} + \bar{V}_1^2}  \right)\om_1^2\\ \nonumber
&&\quad\quad-\left(
 \frac{1}{\bar{u} + \bar{V}_1^1} -  \frac{1}{\bar{u} + \bar{V}_1^2}  \right)\p_2V_4^2 + \frac{1}{\gamma-1}\left( \frac{H^{\gamma-1}(\bar{\mathbf{V}}^1)}{\bar{u} + \bar{V}_1^1} - \frac{H^{\gamma-1}(\bar{\mathbf{V}}^2)}{\bar{u} + \bar{V}_1^1}
  \right)\p_2V_5^2.
\ee
Then one can conclude that
\begin{align*}
    \sum_{i=1}^3\|J_i\|_{1,\Omega} &\leq C \left( \|\bar{\mathbf{Y}}\|_{2,\Omega}\|\mathbf{\om}^2\|_{2,\Omega} + \|(Y_4,Y_5)\|_{2,\Omega}\|\bar{\mathbf{V}}\|_{2,\Omega} +
\|\bar{\mathbf{Y}}\|_{2,\Omega}\|V_4^2,V_5^2\|_{2,\Omega}
 \right)\\ \nonumber
  & \leq C\delta\| \bar{\mathbf{Y}}\|_{2,\Omega}.
\end{align*}
Finally, we estimate $Y_j$, $ j =1,2,3$. By \eqref{3-30}, there holds
\begin{equation}\nonumber
     \begin{cases}
        (1-\bar M^2)\p_1Y_1 + \p_2Y_2 + \p_3Y_3 = F(\bar{\mathbf{V}}^1,\nabla\bar{\mathbf{V}}^1)-F(\bar{\mathbf{V}}^2,\nabla\bar{\mathbf{V}}^2),\\
        \p_2Y_3 - \p_3Y_2 = J_1,\\
        \p_3Y_1 - \p_1Y_3 = J_2,\\
        \p_1Y_2 - \p_2Y_1 = J_3,\\
        Y_1(0,x') = \left(g_1(\bar{\mathbf{V}}^1)-g_1(\bar{\mathbf{V}}^2)\right)(0,x') ,\\
        Y_2(L,x') = \left(g_2(\bar{\mathbf{V}}^1,W_4^1,V_5^1)-g_2(\bar{\mathbf{V}}^2,W_4^2,V_5^2)\right)(L,x') + \kappa^1 -\kappa^2 \\
        (n_2 Y_2 +n_3 Y_3)(x_1,x')=0,\ \ \ \text{on }\ \Ga_w.
    \end{cases}
\end{equation}
Then the following estimate holds
\begin{align*}
    \sum_{j=1}^3\|Y_j\|_{2,\Omega} &\leq \|F(\bar{\mathbf{V}}^1,\nabla\bar{\mathbf{V}}^1)-F(\bar{\mathbf{V}}^2,\nabla\bar{\mathbf{V}}^2)\|_{H^1(\Omega)} + \sum_{k= 1}^3 \|J_k\|_{1,\Omega} \\ \nonumber
    &\quad\quad + \| \left(g_1(\bar{\mathbf{V}}^1)-g_1(\bar{\mathbf{V}}^2)\right)(0,x') \|_{3/2,\Sigma}\\[2ex] \nonumber
    &\quad\quad + \|\left(g_2(\bar{\mathbf{V}}^1,V_4^1,V_5^1)-g_2(\bar{\mathbf{V}}^2,V_4^2,V_5^2)\right)(L,x') + \kappa^1 -\kappa^2 \|_{3/2,\Sigma}\\[2ex] \nonumber
    & \leq C\| {\mathbf{V}}^1, {\mathbf{V}}^2\|_{2,\Omega}\| \bar{\mathbf{Y}}\|_{2,\Omega}\leq C\delta \|\bar{\mathbf{Y}} \|_{2,\Omega}.
\end{align*}
Combining all the above three estimates, we derive that
\begin{equation*}
    \| \mathbf{Y}\|_{2,\Omega} \leq C_2 \delta\| \bar{\mathbf{Y}}\|_{2,\Omega}.
\end{equation*}
Setting $\sigma_0 = \min \{\sigma_1, \frac{1}{3C_1C_2}\}$, for any $\sigma \leq \sigma_0$, $C_2 \delta \leq \frac{2}{3}$, the mapping $\mathcal{J}$ is a contraction with respect to $H^2$ norm, thus there exists a unique fixed point $\mathbf{V} \in \Xi$ of $\mathcal{J}$. Denoting $\textbf{u} = (\bar u + V_1, V_2,V_3)$ and $\rho = H(\bar B+ V_4,\bar K+V_5,|\textbf{u}|^2)$, then \eqref{3-30} becomes
\begin{equation}\nonumber
    \begin{cases}
        \text{div}(\rho \textbf{u}) = 0,\\
        \p_2u_3 - \p_3u_2 = \om_1,\\
        \p_3u_1 - \p_1u_3 = \om_2,\\
        \p_1u_2 - \p_2u_1 = \om_3,\\
        (\rho u_1)(0,x') =\sigma m_0(x'),\\
        (\rho u_1)(L,x') =\sigma m_L(x') + \rho_0(1-\bar M^2)\kappa.
    \end{cases}
\end{equation}
By the compatibility condition \eqref{compm} and the conservation\ of\ mass, we immediately conclude that $ \kappa =0$. Hence the solution
\begin{equation}\nonumber
    (\textbf{u},B,K) = (\bar u + V_1,V_2,V_3,\bar B + V_4,\bar K+V_5),
\end{equation}
is a $H^3(\Omega)$ solution to the system \eqref{euler} with boundary conditions \eqref{1-2}-\eqref{1-5}.

{\bf Acknowledgement.} Weng is supported by National Natural Science Foundation of China 12071359, 12221001.

\end{document}